\tikzset{every node/.style={draw, circle, inner sep=2pt},
every picture/.append style={thick,scale=0.8},
every label/.style={draw=none, rectangle}}
\newtheorem{theorem}{Theorem}[section]
\newtheorem{corollary}[theorem]{Corollary}
\newtheorem{lemma}[theorem]{Lemma}
\theoremstyle{definition}
\newtheorem{example}[theorem]{Example}
\newtheorem{definition}[theorem]{Definition}
\newcommand{\npmatrix}[1]{\left( \begin{matrix} #1 \end{matrix} \right)}
\newcommand{\R}{\mathbb{R}}
\newcommand{\GG}{\mathcal{G}^{\rm SSP}}
\newcommand{\GGSAP}{\mathcal{G}^{\rm SAP}}
\newcommand{\GGSMP}{\mathcal{G}^{\rm SMP}}
\newcommand{\rk}{\operatorname{rk}}  
\newcommand{\tr}{\operatorname{tr}}
\newcommand{\oml}{{\bf m}}
\newcommand{\trans}{^\top}
\newcommand{\dunion}{\mathbin{\dot{\cup}}}
\newcommand{\bx}{{\bf x}}
\newcommand{\by}{{\bf y}}
\newcommand{\bz}{{\bf z}}
\newcommand{\bzero}{{\bf 0}}
\renewcommand{\S}{\mathcal{S}}
\newcommand{\Szb}{\overline{\mathcal{S}_0}}
\newcommand{\dist}{\operatorname{dist}}
\newcommand{\forces}[2]{\xrightarrow[#1]{#2}}
\newcommand{\manuallabel}[2]{\def\@currentlabel{#2}\label{#1}}
    \definecolor{helena}{rgb}{.2,.8,.4}
    \definecolor{polona}{rgb}{.8,.2,.2}
    \definecolor{jephian}{rgb}{.3,.4,.1}
   \definecolor{todo}{rgb}{.2,.2,.8}
\begin{document}
\title{The strong spectral property for graphs}
\author{Jephian C.-H.~Lin\and Polona Oblak\and Helena \v{S}migoc}
\address[J.~C.-H.~Lin]{Department of Applied Mathematics, National Sun Yat-sen University, Kaohsiung 80424, Taiwan}
\email{jephianlin@gmail.com}
\address[P.~Oblak]{Faculty of Computer and Information Science, University of Ljubljana, Ve\v cna pot 113, SI-1000 Ljubljana, Slovenia}
\email{polona.oblak@fri.uni-lj.si}
\address[H.~\v{S}migoc]{School of Mathematics and Statistics, University College Dublin, Belfield, Dublin 4, Ireland}
\email{helena.smigoc@ucd.ie}

\subjclass[2010]{05C50, 15A18, 15B57}
\keywords{Symmetric matrix; Inverse Eigenvalue Problem; Strong spectral property; Graph}
\maketitle

\begin{abstract}
 We introduce the set $\GG$ of all simple graphs $G$ with the property that each symmetric matrix corresponding to a graph $G \in \GG$ has the strong spectral property. We find several families of graphs in $\GG$ and, in particular,  characterise the trees in $\GG$.
\end{abstract}

\section{Introduction}

Let $G=(V(G),E(G))$ be an undirected simple graph.  Define $\S(G)$ to be the set of all real symmetric $n \times n$ matrices $A=\begin{bmatrix}a_{ij}\end{bmatrix}$ such that $a_{ij}\neq 0$ if and only if $\{i,j\} \in E(G)$, with no restriction on the diagonal entries of $A$.  The \emph{inverse eigenvalue problem of a graph} (IEPG) aims to find all  possible spectra of matrices in $\S(G)$. The problem exposes the relations between graphs and matrices, and is motivated by theory of vibrations; see \cite{MR2102477} and the references therein.  The  study of the IEPG has been an active area of research \cite{MR3665573,MR2350678,MR3013937,MR0244285,MR3118942,MR3291662} that motivated several related questions on the spectral properties of matrices with a given pattern, such as the minimum rank problem \cite{MR2350678,MR2388646}, and the problem on the minimum number of distinct eigenvalues \cite{MR3118943,2017arXiv170800064B, MR3891770}.

Strong properties are one of the most powerful tools in the study of the IEPG.  The strong Arnold property  is the first strong property that was introduced \cite{MR1224700,MR1070462}, and it appeared in the definition of the Colin de Verdi\`ere parameter $\mu(G)$ as a non-degenerate condition that allows a matrix to be perturbed slightly without changing its rank.
In \cite{MR3665573,2017arXiv170800064B} the strong multiplicity property (the SMP) and the strong spectral property (the SSP) were defined. Those properties allow a matrix to be perturbed slightly without changing its ordered multiplicity list and its spectrum. 

Whether a matrix has the strong properties depends on the entries of the matrix. However, in some special cases the pattern conditions coming from the graph are enough to guarantee a strong property of a matrix. This paper aims to identify those special cases, i.e.\ those graphs for which all matrices in $\S(G)$ have the strong spectral property. 
We will give a more concise formulation of this question, as well as some further background to the problem, after we establish some notation.

%
%

\subsection{Strong Properties}

The family of all real symmetric matrices of order $n \times n$ will be denoted by $\S_n(\mathbb{R})$. 
  Suppose that a matrix $A\in \S_n(\R)$ has eigenvalues 
\[\lambda_1 < \lambda_2<\cdots<\lambda_{q}\] 
with multiplicities
 $m_1,m_2,\ldots, m_q$.  The \emph{spectrum} of $A$ is the set of all eigenvalues (counting multiplicities) of $A$ and the  \emph{ordered multiplicity list} of $A$ is defined to 
 be the sequence $\oml(A) = (m_1,m_2,\ldots,m_q)$.
 
 By $I_n$ and $0_n$ we  denote the identity and the zero 
matrix in $\S_n(\R)$, respectively. 
 For matrices $A$ and $B$ of the same order, we  denote their entrywise product by $A\circ B$, and their commutator by $[A,B]=AB-BA$.

 The following notions for a symmetric matrix $A$ were defined in  \cite{MR3665573,MR1224700}:
\begin{itemize}
\item $A$ has the \emph{strong spectral property} (the \emph{SSP}) if 
 the only symmetric matrix $X$ satisfying
 $$A\circ X=I \circ X=0 \, \text{ and }\, [A,X]=0$$
 is the zero matrix.
\item $A$ has the \emph{strong multiplicity property} (the \emph{SMP}) if 
the only symmetric matrix $X$ satisfying
 $$A\circ X=I \circ X=0, [A,X]=0 \, \text{ and }\, \tr(A^iX)=0$$
 for $i=2,3,\ldots,q(A)$,  is the zero matrix.
\item  $A$ has the \emph{strong Arnold property} (the \emph{SAP}) if 
the only symmetric matrix $X$ satisfying
 $$A\circ X=I \circ X=0 \, \text{ and }\, AX=0$$
  is the zero matrix.
\end{itemize}
By definition, the SSP implies the SMP, and the SMP implies the SAP.

The SSP and the SMP have already been exploited in several papers  \cite{2017arxiv170802438, MR3067819,MR2181887, 2017arXiv170800064B, MR3665573, 2017arXiv170801821B,MR3891770, MR3536955,MR1703435,MR3621182,MR2009404}.
To illustrate the value of those properties, we highlight the following results from \cite{MR3665573}:
\begin{itemize}
\item If $A\in \S(G)$ has the SSP and $G$ is a spanning subgraph of $H$, then there 
exists $B\in \S(H)$ with the SSP and  $\sigma(A)=\sigma(B)$.
\item If $A\in \S(G)$ has the SMP and $G$ is a spanning subgraph of $H$, then there 
exists $B\in \S(H)$ with the SMP and  $\oml(A)=\oml(B)$.
\end{itemize}

A natural question, motivated by the results above, is to find ways to produce matrices with the strong properties. Clearly, whether a matrix has any of the strong properties depends on the entries of the matrix, not only on its pattern. However, in some very special cases, a strong property may be apparent from the pattern of the matrix. (As a trivial example, any matrix $A \in \S(K_n)$ has the SSP.) 

Some work in this direction has been done on the SAP.  Schrijver and Sevenster \cite{MR3621182} showed that if $G$ is a $4$-connected flat graph and $A$ is a nonpositive matrix in $\S(G)$ with exactly one negative eigenvalue, then $A$ has the SAP.  Lin \cite{MR3536955} showed that if a graph $G$ has the combinatorial property $Z_{\rm SAP}(G)=0$, then every matrix in $\S(G)$ has the SAP. 
In this manuscript we aim to extend this investigation to the SMP and most notably to the SSP.

With this question in mind, we define $\GG$ to be the set of all graphs $G$  such that all matrices $A\in \S(G)$ have the SSP. The sets $\GGSMP$ and $\GGSAP$ are defined in a similar way. Note that $\GG \subseteq \GGSMP \subseteq \GGSAP$.



Our paper offers several methods to determine the strong spectral property from the structure of a graph, or equivalently, to find graphs in $\GG$.  Section~\ref{sec:nssp} lists some results that allow us to identify graphs that do not have the strong spectral property.  In Section~\ref{Sec3} we develop several methods that give sufficient conditions for a  graph to be in $\GG$.  In Section~\ref{sec:trees}, the characterization of trees in $\GG$ is provided.  Finally, a selection of further examples of graphs in $\GG$ (or not in $\GG$) is given in Section~\ref{SSPgraphs}. 

 \subsection{Notation}\label{notation}
 
 
 At this point we will establish some basic notation concerning graphs. For a graph $G=(V(G),E(G))$ we denote its order by $|G|=|V(G)|$.  
We say that two distinct vertices $x,y\in V(G)$ are \emph{adjacent} in~$G$ if $\{x,y\}\in E(G)$.  A sequence of~$k$ edges $\{x_0,x_1\}, \{x_1,x_2\},\ldots,\{x_{k-1},x_{k}\}$ from $E(G)$ with $x_1,\ldots,x_k$ all distinct is called 
\emph{a path of length $k$} in $G$ between the vertices~$x_0$ and $x_k$. The \emph{distance} between two vertices in a graph $G$ is the length (the number of edges) of the shortest path in $G$ between them.
By $N_{G}[v]$ we  denote the \emph{closed neighbourhood} of vertex $v \in G$, i.e. the set of all vertices in $G$ at distance at most 1 from $v$. For $W \subseteq V(G)$, let $G[W]$ denote the induced subgraph of $G$ on vertices $W$.


We will use standard notation for the most common families of graphs. Therefore, $K_n$ will denote the complete graph on $n$ vertices, and $K_{m,n}$ is the complete bipartite graph whose two parts have $m$ and $n$ vertices, respectively. Furthermore, $P_n$ will denote the path on $n$ vertices, and $C_n$ the cycle on $n$ vertices.


There are several established operations on graphs. Here we list the ones that we will meet in this paper.  
\begin{itemize}
\item The \emph{complement}  $G^c$ of a graph $G$ is the graph on vertices $V(G)$ such that two distinct vertices are adjacent in $G^c$ if and only if they are not adjacent in $G$.  
\item  The \emph{$r$-th power} of $G$, denoted as $G^r$, is the graph obtained from $G$ by adding every pair $\{i,j\}$ at a distance at most $r$ in $G$ as an edge. 
\item  The \emph{$r$-th strong power} of $G$, denoted as $G^{(r)}$, is the graph obtained from $G$ by adding every pair 
$\{i,j\}$ at a distance $r$ in $G$ as an edge.
\item The \emph{join} $G \vee H$ of $G$ and $H$ is the graph union $G \cup H$ together with all the possible edges joining the vertices in $G$ to the vertices in $H$.
\item   The \emph{tensor product} of two graphs is defined in the following way:
$G \times H=(V(G \times H), E(G \times H))$, where $V(G \times H)=V(G) \times V(H)$
and $((u,u'),(v,v')) \in E(G \times H)$ if and only if $ (u,v) \in E(G)$ and $(u',v') \in E(H)$.
\item If $V(G)=\{v_1,\ldots,v_n\}$, then the \emph{corona graph} $G\odot H$
of graphs $G$ and $H$ is defined as the graph obtained by taking $n$ copies of $H$ and for each $i$ adding edges between the $i$-th vertex $v_i$ of 
$G$ to each vertex of the $i$-th copy of $H$.
\item If $G$ and $H$ are graphs with $V(H) = V(G)$ and $E(G)= E(H) \cup \{e_1,\ldots, e_k\}$, we write $G=H+\{e_1,\ldots, e_k\}$. If $k=1$, we abbreviate this notation to  $G=H+e_1$.
\end{itemize}
  
 
By $E_{ij}$ we will denote the $0$-$1$ matrix with 
the only nonzero element in the $(i,j)$-th position, where the size of the matrix will be clear from the context.

If $A=(a_{ij})\in \R^{m \times n}$ and $B\in \R^{p \times q}$, then the \emph{tensor} or the \emph{Kronecker product} $A\otimes B$
of matrices $A$ and $B$ is the $mp \times nq$ block matrix, with the $(i,j)$-th block equal to $ a_{ij} B$.

 
 The adjacency matrix of a graph $G$ is the matrix $A\in \S(G)$ defined by
 \[A_{ij}=\begin{cases}
  1, & \{i,j\} \in E(G),\\
  0, & \{i,j\} \notin E(G) \text{ or } i=j.\\
 \end{cases}\]
 Note that the adjacency matrix of a tensor product of graphs is the tensor product of their adjacency matrices.

 Let $\Szb(G)$ be the set of matrices with zero diagonal entries in the closure of $\S(G)$; that is, $\Szb(G)$ are those real symmetric $n\times n$ matrices whose $(i,j)$-entry is zero whenever $i=j$ or $\{i,j\}\notin E(G)$.
 
   For a matrix $A\in\S_n(\mathbb{R})$, let $q(A)$ be the number of distinct eigenvalues of $A$. For a graph $G$, the \emph{minimum number of distinct eigenvalues of} $G$ is defined as 
\[q(G) = \min\{q(A): A \in \S(G)\}.\]

\section{Conditions for graphs not to be in $\GG$}
\label{sec:nssp}

The condition $G \in \GG$ is very restrictive, and as one would expect, most graphs are not contained in $\GG$. In this section we give some general results that imply $G \notin \GG$.  
Our first example shows that any regular graph $G$, not equal to $K_n$, is not contained in $\GG$.

\begin{example}\label{cycle}
Let $G \neq K_n$ be a regular graph, $A\in \S(G)$ be the adjacency matrix of $G$, 
$J$ be the $n\times n$ all-ones matrix, and $X=J-A-I_n$. Since $G$ is a regular graph, we have $AJ=JA$. Now it is straightforward to check that $X$ is a nonzero matrix satisfying: 
$A \circ X=0$, $I_n \circ X=0$,  and $[A,X]=0$, hence proving that $G \notin \GG$.  
\end{example}


%


Once we have identified some graphs not in $\GG$, one can construct other graphs with the same property. We give some examples below.

\begin{lemma} 
Let  $A, X \in \S_m(\R)$ and  $S_i,T \in \S_n(\R)$ for $i=0,1,\ldots,n-1$ be matrices such that:
\begin{itemize} 
\item $A \circ X=I_m \circ X=0$ and $[A,X]=0$, 
\item  $[S_i,T]=0$ for $i=0,\ldots,n-1$, and $S_i \circ T=0$ for $i=2, \ldots,n-1$.
\end{itemize}
Then the matrices $\hat A=\sum_{j=0}^{n-1}(S_j \otimes A^j)$ and $\hat X=T \otimes X$ satisfy 
\[\hat{A}\circ \hat{X}= I_{mn} \circ \hat{X}=0\text{ and }[\hat A,\hat X]=0\]
for any nonnegative integer $k$.
\end{lemma}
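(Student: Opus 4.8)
The plan is to verify the three required identities $I_{mn}\circ\hat X=0$, $\hat A\circ\hat X=0$, and $[\hat A,\hat X]=0$ separately, using two standard compatibility rules for the Kronecker product: the mixed-product property $(P\otimes Q)(R\otimes S)=(PR)\otimes(QS)$ and the Hadamard--Kronecker identity $(P\otimes Q)\circ(R\otimes S)=(P\circ R)\otimes(Q\circ S)$, together with the observation that $I_{mn}=I_n\otimes I_m$.

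For the two Hadamard identities I would first expand, using the Hadamard--Kronecker identity and bilinearity, $\hat A\circ\hat X=\sum_{j=0}^{n-1}(S_j\circ T)\otimes(A^j\circ X)$. The key is a case split on the index $j$: for $j=0$ the factor $A^0\circ X=I_m\circ X=0$; for $j=1$ the factor $A\circ X=0$; and for $j\ge 2$ the factor $S_j\circ T=0$. Hence every summand vanishes. This also explains the precise shape of the hypotheses, namely why $S_i\circ T=0$ is demanded only for $i\ge 2$: the low indices $i=0,1$ are already killed on the $A$-side by $I_m\circ X=0$ and $A\circ X=0$. The identity $I_{mn}\circ\hat X=(I_n\circ T)\otimes(I_m\circ X)=0$ is then immediate from $I_m\circ X=0$.

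For the commutator I would expand $[\hat A,\hat X]=\sum_{j=0}^{n-1}[S_j\otimes A^j,\,T\otimes X]$ and apply the mixed-product property to each bracket, obtaining $(S_jT)\otimes(A^jX)-(TS_j)\otimes(XA^j)$. The crucial step is that $[A,X]=0$ forces $A^jX=XA^j$ for every $j\ge 0$, which follows by an easy induction on $j$. Consequently the two Kronecker summands share the common second factor $A^jX$, and the bracket collapses to $[S_j,T]\otimes(A^jX)$. Since $[S_j,T]=0$ for all $j$ by hypothesis, the entire sum is zero.

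There is no substantive obstacle here; the argument is a direct computation once the right product rules are in hand. The only points requiring care are bookkeeping ones: tracking which hypothesis annihilates which summand in the Hadamard product (the asymmetry between the low powers $A^0,A^1$ and the higher powers $A^j$, $j\ge2$), and recognizing that the commutativity of $A$ and $X$ is exactly what lets each Kronecker commutator factor as $[S_j,T]\otimes(A^jX)$ rather than leaving two unmatched terms.
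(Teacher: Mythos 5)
Your proposal is correct and follows essentially the same route as the paper: both verify the three identities via the Hadamard--Kronecker compatibility $(P\otimes Q)\circ(R\otimes S)=(P\circ R)\otimes(Q\circ S)$, the mixed-product property, and the observation that $[A,X]=0$ gives $A^jX=XA^j$ for all $j$. The only cosmetic difference is the order in which the two commutativity hypotheses are applied in the commutator computation (you collapse each bracket to $[S_j,T]\otimes(A^jX)$, while the paper first replaces $TS_j$ by $S_jT$ and then factors out $S_jT\otimes(A^jX-XA^j)$), which is immaterial.
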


\begin{proof}
The proof is a straightforward application of properties of operations $\otimes$ 
and $\circ$. Indeed, we have 
\begin{align*}
\hat{A}\circ \hat{X}&=\left(\sum_{j=0}^{n-1}(S_j \otimes A^j)\right) \circ (T \otimes X)\\
&=\sum_{j=0}^{n-1}(S_j \circ T) \otimes (A^j \circ X) =0,
\end{align*}
since $S_j\circ T=0$ for $j=2,\ldots, n-1$ and $A^j\circ X=0$ for $j=0,1$.  Also,
\begin{align*}
I_{mn}\circ \hat{X}=(I_n\circ T)\otimes (I_m\circ X)=0,
\end{align*}
and
\begin{align*}
[\hat A, \hat X]&=\left(\sum_{j=0}^{n-1}(S_j \otimes A^j)\right)(T \otimes X)-(T \otimes X)\left(\sum_{j=0}^{n-1}(S_j \otimes A^j)\right)\\
&=\sum_{j=0}^{n-1}\left( (S_j T \otimes A^jX)-(TS_j \otimes XA^j) \right) \\
&=\sum_{j=0}^{n-1}\left( (S_j T \otimes A^jX)-(S_jT \otimes XA^j) \right) \\
&=\sum_{j=0}^{n-1}\left( (S_j T \otimes (A^jX-XA^j) \right)=0.
\end{align*}
Therefore, $\hat{A}$ and $\hat{X}$ have the desired properties.
\end{proof}

Note that for $T=I_n$, the condition  $[S_i,T]=0$  is automatically satisfied. In particular, this choice gives us the following corollary.  

\begin{corollary}
Let $G \notin \GG$. Then: 
\begin{enumerate}
\item  $G \otimes H \notin \GG$ for any graph $H$. 
\item The corona graph $G \odot K_{m-1}^c \notin \GG$. 
\end{enumerate}
\end{corollary}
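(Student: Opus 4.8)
The plan is to read both parts off the preceding Lemma with the special choice $T=I$ suggested in the remark. Fix a witness for $G\notin\GG$: a matrix $A\in\S(G)$ and a nonzero symmetric $X$ with $A\circ X=I\circ X=0$ and $[A,X]=0$. With $T=I$ the hypothesis $[S_i,T]=0$ becomes vacuous and $\hat X=T\otimes X=I\otimes X$ is nonzero precisely because $X\neq0$; the Lemma then delivers $\hat A\circ\hat X=I\circ\hat X=0$ and $[\hat A,\hat X]=0$ at no cost. Thus $\hat X$ already witnesses the failure of the SSP for $\hat A$, and the \emph{only} remaining task in each case is to choose the $S_j$ so that $\hat A=\sum_j S_j\otimes A^j$ actually lies in $\S$ of the target graph, i.e.\ has off-diagonal support equal to its edge set. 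The combinatorics of the two graph operations enters exactly here.

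For the corona I would take $n=m$, let $S_0\in\S(K_{1,m-1})$ be a matrix of the star with centre in position $1$, set $S_1=E_{11}$ and $S_j=0$ for $j\ge2$, so that $\hat A=S_0\otimes I+E_{11}\otimes A$. Reading the $m\times m$ tensor blocks, indexed by the star positions with each block indexed by $V(G)$, the $(1,1)$ block equals $A+(S_0)_{11}I$ and places a copy of $G$ on the ``centre'' vertices; each block $(1,l)$ with $l\ge2$ is a scalar multiple of $I$ and attaches to every centre its own private leaf; and every block $(l,l')$ with $l,l'\ge2$ is diagonal. This is precisely the adjacency pattern of $G\odot K_{m-1}^c$, so $\hat A\in\S(G\odot K_{m-1}^c)$; since $S_j=0$ for $j\ge2$ and $A^0\circ X=A^1\circ X=0$, the remaining hypotheses of the Lemma hold and (2) is done. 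Observe that a nonzero diagonal of $A$ does no harm here, as it is absorbed into the centre block, where the diagonal is free.

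For the tensor product $G\times H$ I would take $n=|H|$, let $B$ be the adjacency matrix of $H$, and set $S_1=B$, $S_j=0$ otherwise, so that $\hat A=B\otimes A$ and $\hat X=I\otimes X$; the hypotheses of the Lemma again hold trivially. The pattern check reduces to a single observation: the $((u',u),(v',v))$-entry of $B\otimes A$ is $B_{u'v'}A_{uv}$, which is nonzero exactly when $\{u',v'\}\in E(H)$ and $\{u,v\}\in E(G)$ with $u\neq v$ --- precisely the edges of $G\times H$ --- \emph{provided that $A$ has zero diagonal}, for then the entries at pairs with $u=v$, $u'\neq v'$ vanish. Granting this, $\hat A\in\S(G\times H)$ and (1) follows.

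The step I expect to be the real obstacle is precisely this proviso for (1). If some diagonal entry $A_{uu}$ is nonzero then $B\otimes A$ is nonzero at the pair $\{(u',u),(v',u)\}$ for every edge $\{u',v'\}$ of $H$, and this pair is a \emph{non}-edge of $G\times H$; unlike the corona there is no vertex of $G\times H$ able to absorb the contribution, and one checks that no auxiliary $S_0$ can cancel it unless the diagonal of $A$ is constant. My plan is therefore to arrange a zero-diagonal witness before invoking the construction. When the diagonal of $A$ is constant this is immediate, since $A\mapsto A-cI$ preserves all three defining relations; in particular, for the regular graphs of Example~\ref{cycle} the witness is already the (zero-diagonal) adjacency matrix with $X=J-A-I$. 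The genuinely delicate point, which I would settle first, is that whenever $G\notin\GG$ one may in fact choose a witness $A\in\Szb(G)$ with full support; once this is in hand, the support computation above makes (1) routine.
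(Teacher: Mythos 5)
Your construction is exactly the paper's: the same preceding Lemma specialised with $T=I_m$, taking $S_1$ to be the adjacency matrix of $H$ for part (1), and taking $S_0$ the star pattern and $S_1=E_{11}$ for part (2). Your treatment of part (2) is complete and agrees with the paper's choice $S_0=\sum_{j\ge 2}(E_{1j}+E_{j1})$, $S_1=E_{11}$; as you observe, the diagonal of the witness $A$ ends up only in the centre block of $\hat A$, where it is unconstrained, so $\hat A\in\S(G\odot K_{m-1}^c)$ for an arbitrary witness.

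For part (1), the obstacle you flag is genuine --- and it is in fact a gap in the paper's own proof, which sets $\hat A=B\otimes A$ with $B$ the adjacency matrix of $H$ and never discusses the diagonal: if $A_{uu}\neq 0$ and $\{u',v'\}\in E(H)$, then $B\otimes A$ has the nonzero entry $B_{u'v'}A_{uu}$ at the non-edge $\{(u',u),(v',u)\}$ of $G\otimes H$, so $\hat A\notin\S(G\otimes H)$. However, your proposed repair, namely that $G\notin\GG$ always admits a witness in $\Szb(G)$ with full support, is left unproved and I do not see how to prove it: removing the diagonal part $D$ of $A$ changes $[A,X]$ by $-[D,X]$, which need not vanish, so the failure of the SSP can genuinely depend on the diagonal. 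As written, your proof of (1) is therefore incomplete. A repair that avoids this claim entirely: write $A=A_0+D$ with $D$ diagonal and $A_0$ zero-diagonal; assuming $H$ has an edge (otherwise $G\otimes H$ is edgeless and trivially not in $\GG$), let $\lambda\neq 0$ be an eigenvalue of the adjacency matrix $B_0$ of $H$ with unit eigenvector $v$, and set $B=B_0/\lambda$, $\hat A=B\otimes A_0+I_m\otimes D$ and $\hat X=vv\trans\otimes X$. Then $\hat A\in\S(G\otimes H)$ since the diagonal of $A$ now contributes only to the diagonal of $\hat A$; moreover $\hat A\circ\hat X=(B\circ vv\trans)\otimes(A_0\circ X)+(I_m\circ vv\trans)\otimes(D\circ X)=0$, $I_{mn}\circ\hat X=0$, and, using $Bv=v$ and symmetry, $[\hat A,\hat X]=vv\trans\otimes[A_0+D,X]=vv\trans\otimes[A,X]=0$, so the nonzero $\hat X$ witnesses $G\otimes H\notin\GG$. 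Note this leaves the Lemma's framework (it is not of the form $\sum_j S_j\otimes A^j$ with $T=I$), but the verification is three lines.
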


\begin{proof}
Suppose $|G|=n$ and $|H|=m$.  Let $A\in\S(G)$ and $X\in\S_n(\R)$ be matrices satisfying $A\circ X=I_n\circ X=0$ and $[A,X]=0$.  To prove item 1), take $T=I_m$, $S_1$ to be the adjacency matrix of $H$, and $S_0=S_2= \cdots =S_{n-1}=0_m$. To prove item 2), take $T=I_m$, $S_0=\sum_{i=2}^n (E_{1j}+E_{j1})$, $S_1=E_{11}$, and $S_2= \cdots =S_{n-1}=0_m$.
\end{proof}

\begin{definition}
A \emph{barbell partition} of a graph $G$ is a partition of $V(G)$ into three disjoint parts $\{R,W_1,W_2\}$ such that
\begin{itemize}
\item $R$ is allowed to be an empty set, but $W_i\neq\emptyset$ for $i=1,2$;
\item there are no edges between vertices in $W_1$ and vertices in $W_2$;
\item for each $v\in R$, $|N_G(v)\cap W_i|\neq 1$ for $i=1,2$.
\end{itemize}
\end{definition}

As a trivial example, $2K_1$ has a barbell partition where $R=\emptyset$ and both $W_1,W_2$ consist of one element. On the other hand, any barbell partition for $K_{1,3}$ would have to have the center vertex included in $R$, to assure that there are no edges between $W_1$ and $W_2$. This would imply $|W_1|=1$ or $|W_2|=1$, contradicting the condition that the center vertex cannot have exactly one neighbour in $W_1$ or $W_2$. 

\begin{lemma}\label{lem:par}
  Let $G$ be a graph with a barbell partition.  Then there is a matrix $M\in \S(G)$ such that $M$ does not have the SAP (and the SSP).
\end{lemma}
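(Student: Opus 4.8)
The plan is to produce a single explicit matrix $M\in\S(G)$ together with a nonzero symmetric $X$ witnessing the failure of the SAP; since the SSP implies the SAP, the same $X$ will show that $M$ has neither property. Recall that $M$ fails the SAP exactly when there is a nonzero symmetric $X$ with $M\circ X=I\circ X=0$ and $MX=0$.

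Let $\bx$ be the vector equal to $1$ on each vertex of $W_1$ and $0$ elsewhere, let $\by$ be the corresponding all-ones vector supported on $W_2$, and set $X=\bx\by\trans+\by\bx\trans$. Since $W_1\cap W_2=\emptyset$, the diagonal of $X$ vanishes, so $I\circ X=0$; and the nonzero off-diagonal entries of $X$ occur only in positions $\{i,j\}$ with one endpoint in $W_1$ and the other in $W_2$. By the definition of a barbell partition there are no edges between $W_1$ and $W_2$, so these positions are non-edges of $G$, and hence $M\circ X=0$ for \emph{every} $M\in\S(G)$; moreover $X\neq 0$. Since $MX=(M\bx)\by\trans+(M\by)\bx\trans$, it then suffices to build $M\in\S(G)$ with $M\bx=0$ and $M\by=0$.

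I would construct such an $M$ blockwise. On the principal submatrix $M[W_1]$, assign arbitrary nonzero reals to the off-diagonal entries sitting on edges of $G[W_1]$, and then set each diagonal entry so that $M[W_1]$ has all row sums equal to zero; because $\bx$ is supported on $W_1$, this already gives $(M\bx)_v=0$ for every $v\in W_1$. Do the same on $M[W_2]$ to handle the rows indexed by $W_2$ against $\by$. For a vertex $v\in R$, the value $(M\bx)_v$ equals the sum of the entries of row $v$ over $N_G(v)\cap W_1$; here the barbell hypothesis $|N_G(v)\cap W_1|\neq 1$ is decisive: this neighbourhood is either empty, in which case the row already contributes $0$, or has at least two vertices, in which case I can choose the corresponding (necessarily nonzero) entries so that they sum to zero. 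Treating the $R$--$W_2$ entries identically secures $(M\by)_v=0$ for $v\in R$, and the remaining free data (the block on $R$ and any diagonal entries not yet fixed) may be filled in arbitrarily, consistent with the pattern of $G$.

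It then remains only to verify $M\bx=0$ and $M\by=0$ row by row: rows indexed by $W_1$ vanish against $\bx$ by the zero-row-sum choice, rows indexed by $W_2$ vanish because there are no $W_1$--$W_2$ edges, and rows indexed by $R$ vanish by the summing-to-zero choice; symmetrically for $\by$. The one delicate point---and the reason the barbell condition is precisely the right hypothesis---is the treatment of the rows in $R$: if some $v\in R$ had exactly one neighbour in $W_1$, the single forced-nonzero entry of row $v$ could not be cancelled against the all-ones vector $\bx$, and the construction would break down. Everything else is a routine verification.
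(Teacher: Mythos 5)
Your proposal is correct and takes essentially the same approach as the paper: your $X=\bx\by\trans+\by\bx\trans$ is exactly the paper's witness matrix (all-ones blocks in the $W_1$--$W_2$ positions), and your row-by-row construction of $M$ (zero row sums on the $W_1$ and $W_2$ blocks, entries from $R$ into each $W_i$ chosen to sum to zero using $|N_G(v)\cap W_i|\neq 1$) is the paper's choice of Laplacians $L_1,L_2$ and zero-column-sum blocks $B_1,B_2$, only with arbitrary nonzero weights in place of the specific Laplacian entries. The verification that $MX=0$, $M\circ X=I\circ X=0$, and $X\neq 0$ matches the paper's argument step for step.
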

\begin{proof}
Let $\{R,W_1,W_2\}$ be a barbell partition of $G$.  Let us define a matrix $M\in \S(G)$ as 
\[M=\begin{bmatrix} 
A & B_1\trans  & B_2\trans  \\
B_1 & L_1 & 0 \\
B_2 & 0 & L_2
\end{bmatrix},\]
where the block partition of $M$ is consistent with the partition $\{R,W_1,W_2\}$, $A$ is the adjacency matrix of $G[R]$, $L_i$ are the Laplacian matrices of $G[W_i]$ for $i=1,2$, and $B_1$ and $B_2$ are matrices having the required pattern and zero column sums. Note that the condition $|N_G(v)\cap W_i|\neq 1$ for each $v \in R$, guarantees that such matrices $B_i$ exist. 

Let 
\[X=\begin{bmatrix}
0 & 0 & 0 \\
0 & 0 & J \\
0 & J^{\trans} & 0
\end{bmatrix},\]
where $J$ denotes a matrix of appropriate size with all its elements equal to $1$. 
Clearly, we have $M\circ X=I\circ X=0$. Moreover, $MX=0$, since all the matrices $B_1$, $B_2$, $L_1$, and $L_2$ have zero column sums.  This proves that the matrix $M$ does not have the SAP, so it does not have the SSP.  
\end{proof}

Let us look at a special case of this Lemma, when $R$ is a single vertex.

\begin{corollary}\label{cor:deg4}
Let $G$ be a graph with a cut-vertex $v$ and $G-\{v\}=G_1\cup G_2$ such that $v$ has at least 2 neighbours in each $G_i$. Then $G \notin \GG$. (Note that $G_i$ are not required to be connected.)
\end{corollary}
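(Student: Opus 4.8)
The plan is to recognize this corollary as the promised special case of Lemma~\ref{lem:par} with $R$ a single vertex, so the entire argument reduces to exhibiting a barbell partition of $G$. First I would set $R=\{v\}$, $W_1=V(G_1)$, and $W_2=V(G_2)$, and claim that $\{R,W_1,W_2\}$ is a barbell partition of $G$. These three sets partition $V(G)$ precisely because $G-\{v\}=G_1\cup G_2$ is a disjoint union, so every vertex of $G$ other than $v$ lies in exactly one of $W_1,W_2$.

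Next I would verify the three defining conditions of a barbell partition in turn. The set $R$ is allowed to be a singleton, and both $W_1$ and $W_2$ are nonempty since $v$ has at least two neighbours in each $G_i$, forcing $|V(G_i)|\geq 2$. The absence of edges between $W_1$ and $W_2$ is exactly the statement that writing $G-\{v\}=G_1\cup G_2$ as a (disjoint) union means no edge of $G$ joins a vertex of $G_1$ to a vertex of $G_2$; this is where the cut-vertex hypothesis is used. Finally, the degree condition $|N_G(v)\cap W_i|\neq 1$ for $i=1,2$ holds because, by assumption, $|N_G(v)\cap W_i|\geq 2$.

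With the barbell partition in hand, Lemma~\ref{lem:par} immediately produces a matrix $M\in\S(G)$ that fails the SAP, and hence also fails the SSP. Since $\GG$ consists of those graphs all of whose matrices have the SSP, the existence of a single such $M$ shows $G\notin\GG$, completing the proof. There is no genuine obstacle here: the only point requiring care is the interpretation of the disjoint-union notation $G-\{v\}=G_1\cup G_2$ as ruling out $W_1$--$W_2$ edges, which is what lets us invoke the barbell machinery rather than rebuild the matrix $M$ and the witness $X$ by hand.
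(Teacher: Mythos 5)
Your proof is correct and follows exactly the paper's argument: take $R=\{v\}$, $W_1=V(G_1)$, $W_2=V(G_2)$, check this is a barbell partition, and invoke Lemma~\ref{lem:par}. The paper states the verification of the barbell conditions without elaboration, whereas you spell it out, but the substance is identical.
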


\begin{proof}
Let $R=\{v\},$ $W_1=V(G_1)$ and $W_2=V(G_2)$. Then $\{R, W_1, W_2\}$ is a barbell partition of $G$. By 
Lemma \ref{lem:par} it follows that $G \notin \GG$. 
\end{proof}

\begin{corollary}\label{cor:0-2trees}
If a graph $G$ contains a vertex of degree at least four contained in at most one cycle, then $G \notin \GG$.
\end{corollary}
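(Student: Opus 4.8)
The plan is to reduce Corollary~\ref{cor:0-2trees} to the barbell-partition machinery, specifically to Corollary~\ref{cor:deg4}. Let $v$ be a vertex of degree at least four that lies in at most one cycle of $G$. The goal is to show that $v$ is a cut-vertex whose removal splits the graph into pieces in which $v$ has at least two neighbours on each side, so that Corollary~\ref{cor:deg4} applies directly.

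First I would analyse the local structure at $v$. Since $v$ has degree at least four but belongs to at most one cycle, most of the edges at $v$ must be bridges. More precisely, I would argue that among the components of $G-\{v\}$, at most one can contain two or more neighbours of $v$ that are joined to $v$ through a cycle; any neighbour of $v$ lying in a component that reconnects to $v$ via a second internally disjoint path would produce a cycle through $v$, and two such independent reconnections (or one reconnection together with a second cycle) would force $v$ into at least two distinct cycles. The key combinatorial step is therefore to partition the neighbours of $v$ according to the components of $G-\{v\}$ they fall into, and to use the "at most one cycle" hypothesis to control how many neighbours each component can absorb.

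Next I would construct the desired vertex cut. The main case is that $v$ has neighbours in at least two distinct components of $G-\{v\}$; then $v$ is automatically a cut-vertex, and I would group the components so that each of the two resulting parts $G_1, G_2$ contains at least two neighbours of $v$. Because $\deg(v)\ge 4$, if the neighbours are spread over several components this grouping is easy; the delicate situation is when almost all neighbours lie in a single component, say $G_1$. In that case the component $G_1$ must contain at least two neighbours of $v$ connected to $v$ by internally disjoint paths, which would already create cycles through $v$, and having $\deg(v)\ge 4$ forces enough such connections to violate the "at most one cycle" bound unless the remaining neighbours split off into a separate bridge-component $G_2$ giving $v$ its required two neighbours there. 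I would therefore separate the argument into the case where $v$ lies in no cycle (so every edge at $v$ is a bridge and the four neighbours distribute among bridge-components, which can trivially be regrouped into two sides with at least two neighbours each) and the case where $v$ lies in exactly one cycle (so exactly two edges at $v$ belong to that cycle, and the remaining $\deg(v)-2\ge 2$ edges are bridges that can be placed on one side while the cycle edges anchor the other side).

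The main obstacle I anticipate is the careful bookkeeping in the single-cycle case: I must verify that once the two cycle-edges at $v$ are set aside, the remaining neighbours can always be partitioned so that both $W_1$ and $W_2$ receive at least two neighbours of $v$, while ensuring no edges run between $W_1$ and $W_2$ (which is where the cut-vertex and "at most one cycle" properties must be invoked together). Once the partition $\{R=\{v\},W_1,W_2\}$ with $|N_G(v)\cap W_i|\ge 2$ is established, the conclusion $G\notin\GG$ follows immediately from Corollary~\ref{cor:deg4}.
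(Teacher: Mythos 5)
Your proposal is correct and matches the paper's intended argument: the paper states this corollary without proof, as an immediate consequence of Corollary~\ref{cor:deg4}, and your reduction---using the at-most-one-cycle hypothesis to show that every component of $G-\{v\}$ contains at most two neighbours of $v$ (with at most one component containing two), so that the components can be grouped into two sides each holding at least two neighbours of $v$---is precisely that omitted verification. Nothing essentially different from the paper's route is involved.
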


For trees and uni-cyclic graphs, we can make this observation even more specific. 

\begin{corollary}\label{rem:tnotinGG}
A tree $T$ with a vertex $v$ satisfying $\deg(v)\geq 4$ or two  vertices $u,v$ with $\deg(u),\deg(v)\geq 3$ is not in $\GG$.   
\end{corollary}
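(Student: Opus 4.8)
The plan is to reduce the statement about trees to the already-established Corollaries~\ref{cor:deg4} and~\ref{cor:0-2trees}. The key structural fact I want to exploit is that a tree has no cycles at all, so every vertex is contained in \emph{zero} cycles, which is certainly ``at most one cycle.'' This means that any vertex of degree at least four in a tree immediately triggers Corollary~\ref{cor:0-2trees}, giving $T\notin\GG$. So the first case, a single vertex $v$ with $\deg(v)\geq 4$, is essentially immediate.

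For the second case I would take two distinct vertices $u,v$ with $\deg(u),\deg(v)\geq 3$. The idea is to build a barbell partition (or directly verify the hypotheses of Corollary~\ref{cor:deg4}) using one of these vertices as the cut-vertex. Since $T$ is a tree, \emph{every} non-leaf vertex is a cut-vertex, and removing a vertex $v$ of degree $d$ splits $T$ into exactly $d$ connected components (the subtrees hanging off each of the $\deg(v)$ edges at $v$). First I would try to apply Corollary~\ref{cor:deg4} directly by choosing a suitable grouping of these components into $G_1$ and $G_2$ so that $v$ has at least two neighbours in each part. The obstacle is that Corollary~\ref{cor:deg4} as stated requires at least two neighbours on \emph{both} sides, and a degree-$3$ vertex splits into only three components, so one side would get only one neighbour. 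This is exactly why a single degree-$3$ vertex is \emph{not} enough and why the hypothesis demands two such vertices.

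To overcome this, I would use the second high-degree vertex $u$ to ``absorb'' extra neighbours on the deficient side. Concretely, consider the vertex $v$ with $\deg(v)\geq 3$ as a candidate cut-vertex. The three (or more) subtrees hanging off $v$ partition the rest of $T$; the path from $v$ to $u$ lies in exactly one of these subtrees, say the component $C$. I group the remaining $\deg(v)-1\geq 2$ subtrees together to form $W_1$ (so $v$ has at least two neighbours in $W_1$), and I want $C$ together with enough of its structure around $u$ to supply at least two neighbours of $v$ inside $W_2$. The point is that inside $C$ the vertex $u$ has degree at least three in $T$, and since $u\neq v$, at least two of $u$'s neighbours lie strictly on the far side of $u$ away from $v$; these, combined with the edge from $v$ into $C$, let me engineer a barbell partition with $R=\{v,u\}$ or place both $u,v$ in $R$ and check the neighbour-count condition $|N_G(w)\cap W_i|\neq 1$ for each $w\in R$. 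The cleanest route is probably to set $R=\{u,v\}$, let $W_1$ be the union of subtrees at $v$ not containing $u$ (giving $v$ at least two neighbours there and $u$ zero, so the condition holds vacuously or with count $0$), and let $W_2$ be the union of subtrees at $u$ not containing $v$ (giving $u$ at least two neighbours and $v$ zero); one then checks there are no edges between $W_1$ and $W_2$ because any such edge would create a cycle in the tree.

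The main obstacle is the bookkeeping of the neighbour counts for vertices in $R$: I must ensure that \emph{neither} $u$ nor $v$ has exactly one neighbour in $W_1$ or in $W_2$, and that the vertices on the $u$--$v$ path (which must go somewhere) are placed consistently without violating the ``no edges between $W_1$ and $W_2$'' condition or introducing a count of exactly one. Because $T$ is acyclic, the $u$--$v$ path is unique, and I expect to place its internal vertices into $R$ as well, or to argue that the degree-$\geq 3$ conditions at $u$ and $v$ supply the needed multiplicity of two on each side regardless. Once a valid barbell partition is exhibited, Lemma~\ref{lem:par} yields a matrix $M\in\S(T)$ without the SAP, hence without the SSP, so $T\notin\GG$, completing the proof.
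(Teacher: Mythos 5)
Your overall route is the same as the paper's: the paper's proof consists precisely of the claim that such a tree admits a barbell partition, followed by Lemma~\ref{lem:par}; your proposal supplies the construction that the paper leaves implicit. The degree-$\geq 4$ case via Corollary~\ref{cor:0-2trees} is immediate and correct, since every vertex of a tree lies in zero cycles.

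In the two-degree-$3$ case, however, the partition as you describe it does not cover $V(T)$, and this is the one point you leave unresolved. Take $W_1$ to be the union of the components of $T-v$ not containing $u$, and $W_2$ the union of the components of $T-u$ not containing $v$. Besides $u$, $v$, and the internal vertices of the $u$--$v$ path, the tree may contain entire subtrees hanging off those internal path vertices, and these vertices belong to none of $W_1$, $W_2$, $R$ in any of the variants you propose. Note that they cannot be pushed into $W_1$ or $W_2$: if a subtree hanging off an internal path vertex $p$ were placed in $W_1$, then $|N_T(p)\cap W_1|=1$, violating the barbell condition at $p\in R$. The correct completion is to put every such vertex into $R$, i.e.\ to take $R=V(T)\setminus(W_1\cup W_2)$. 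Then every $w\in R$ other than $u$ and $v$ lies both in the component of $T-v$ containing $u$ and in the component of $T-u$ containing $v$; since an edge of $T$ avoiding $v$ cannot join two different components of $T-v$ (and similarly for $u$), such a $w$ has no neighbour in $W_1$ and none in $W_2$, so $|N_T(w)\cap W_i|=0\neq 1$ for $i=1,2$. The same observation shows there are no edges between $W_1$ and $W_2$, while $v$ has $\deg(v)-1\geq 2$ neighbours in $W_1$ and none in $W_2$, and symmetrically for $u$. With this completion your argument is correct, and it produces exactly the barbell partition that the paper asserts (without writing down) to exist.
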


\begin{proof}
It is easy to see that a tree $T$ satisfying conditions above has a barbell partition so $T\notin\GG$ by Lemma~\ref{lem:par}.
\end{proof}

\begin{corollary}\label{rem:uninotinGG}
A unicyclic graph with a  vertex  of degree at least four or  with a degree three vertex not contained in the cycle is not in $\GG$.  
\end{corollary}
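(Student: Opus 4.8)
The plan is to split the argument according to the two hypotheses and, in each case, to reduce to a statement already established for barbell partitions. Write $G$ for the (connected) unicyclic graph and $C$ for its unique cycle. If $G$ has a vertex $u$ with $\deg(u)\ge 4$, then since $G$ contains only one cycle, $u$ lies on at most one cycle, and Corollary~\ref{cor:0-2trees} immediately gives $G\notin\GG$. So the substance of the proof lies in the second case, where $v$ is a vertex with $\deg(v)=3$ that does not lie on $C$; here I would produce a barbell partition of $G$ and invoke Lemma~\ref{lem:par}.

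First I would record the local structure at $v$. Since $v$ is not on $C$ and $G$ has a single cycle, no two neighbours of $v$ can be joined by a path avoiding $v$, for such a path together with $v$ would form a second cycle. Hence $G-v$ has exactly three components $C_1,C_2,C_3$, one containing each neighbour of $v$, and at most one of them---say $C_1$---contains $C$; in particular $C_2$ and $C_3$ are trees. Let $z$ be the vertex of $C$ nearest to $v$ and let $P\colon v=x_0,x_1,\dots,x_t=z$ be the unique $v$--$z$ path, so that the two cycle-neighbours $y_1,y_2$ of $z$ lie off $P$. The two anchors of the partition will be the two cycle edges at $z$ (pushing $y_1,y_2$ into $W_1$) and the two non-cycle branches at $v$ (pushing their roots into $W_2$).

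Concretely, I would set $W_2=C_2\cup C_3$, put the cycle-neighbours of $z$ together with the whole ``cycle side'' into $W_1$, and take $R$ to consist of $P$ together with some absorbed vertices. The only obstruction to the condition $|N_G(r)\cap W_i|\neq 1$ comes from an internal vertex $x_i$ of $P$ having exactly one neighbour off $P$, since that would contribute a single neighbour to one side. The key point is that this is repaired by \emph{absorption}: follow the offending tree branch, adding vertices to $R$ as long as the current vertex has a unique unassigned neighbour, and stop upon reaching a leaf (no conditions remain) or a vertex with at least two unassigned neighbours (assign all of them to $W_1$). Because only vertices of $R$ impose conditions and each absorbed subtree is placed wholesale into a single part, no conditions arise inside these subtrees, and the process terminates as each branch is a finite tree.

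It then remains to verify the three defining properties of a barbell partition for $\{R,W_1,W_2\}$. Both parts are nonempty, since $y_1,y_2\in W_1$ and the roots of $C_2,C_3$ lie in $W_2$. There are no edges between $W_1$ and $W_2$ because any such edge would have to cross $P\subseteq R$: the cycle $C$ lies entirely on the $W_1$ side and $P$ is the unique path from $v$ to $C$, so the two sides communicate only through $R$. Finally, by construction $v$ has its two branch-roots in $W_2$ and none in $W_1$, $z$ has $y_1,y_2$ (and any further cycle-side branches) in $W_1$ and none in $W_2$, every interior vertex of $P$ has either no off-path neighbour or has had its single branch absorbed, and every absorbed vertex ends with $0$ or at least $2$ neighbours outside $R$; hence $|N_G(r)\cap W_i|\neq 1$ for all $r\in R$. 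I expect the main obstacle to be precisely the bookkeeping of this absorption step---formulating it so that every vertex of $R$ provably avoids having exactly one neighbour in a given part while keeping $W_1$ and $W_2$ nonadjacent; the reduction of the degree-$\ge 4$ case is immediate by comparison.
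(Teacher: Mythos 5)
Your proof is correct and follows essentially the same route as the paper: the degree-$\geq 4$ case is dispatched by Corollary~\ref{cor:0-2trees}, and the off-cycle degree-three case by exhibiting a barbell partition anchored at that vertex and at the cycle vertex where its branch attaches, then invoking Lemma~\ref{lem:par} — exactly the partition the paper asserts exists without spelling it out. One simplification worth noting: your absorption bookkeeping can be avoided entirely by placing every subtree hanging off an interior vertex of the path wholesale into $R$, since a vertex all of whose neighbours lie in $R$ has zero neighbours in each $W_i$ and thus trivially satisfies the condition $|N_G(r)\cap W_i|\neq 1$.
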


\begin{proof}
If a unicyclic graph $G$ has a vertex of degree at least four, then by Corollary \ref{cor:0-2trees} we have $G \notin \GG$. If $G$ 
has  a degree three vertex not contained in the cycle, then it has a degree three vertex on the cycle as well.  Thus it has a  barbell partition, so $T\notin\GG$ by Lemma~\ref{lem:par}.
\end{proof}

\section{Local properties of graphs related to the SSP}
\label{Sec3}

Let $G$ be a graph on $n$ vertices, $A=\begin{bmatrix}a_{ij}\end{bmatrix}\in\S(G)$, and $X=\begin{bmatrix}x_{ij}\end{bmatrix}$ a symmetric matrix satisfying  
 $$A\circ X = I\circ X=0 \text{ and }[A,X] = 0.$$
The first two equalities guarantee that $x_{ij}=0$ for $i=j$ and for $\{i,j\} \in E(G)$. 
The condition $[A,X]=0$ gives several of equalities of the form: 
\begin{align}
   [A,X]_{i,j}&=\sum\limits_{k=1}^n (a_{ik}x_{kj}- a_{jk}x_{ik}) \nonumber \\
    &=\sum\limits_{k\in N_G[i]\cap N_{G}[j]^c} a_{ik}x_{kj}-
        \sum\limits_{k\in N_G[j] \cap N_{G}[i]^c} a_{jk}x_{ik}=0, \label{initcen}
\end{align}
that we will view as a system of linear equations in variables $x_{ij}$ for $\{i,j\}\in E(G^c)$. Or aim is to identify families of graphs $G$ for which this system has only a trivial solution for all choices of $A \in S(G).$ 

For example, it may happen that pattern constrains on $A$ and $X$ coming from $G$ cause an equation of the form \eqref{initcen} to consist of only one term for some specific $i$ and $j$: 
\[[A,X]_{i,j} = a_{ik}x_{kj} = 0.\]
If $\{i,k\}\in E(G)$, then $a_{ik}\neq 0$ and consequently $x_{kj}=0$. To keep an account of entries of $X$ that we  know are zero at each specific step, we define a new graph: $G_1 = G + \{k,j\}$, and note that  
$X\in\Szb(G_1^c)$. 
 Suppose a similar situation occurs consecutively with 
 \[G = G_0 \subset G_1 \subset \cdots \subset G_{r},\] 
and we can gradually deduce more entries in $X$ to be zero: 
\[X\in\Szb(G_0^c)\implies \cdots\implies X\in\Szb(G_{r-1}^c)\implies X\in\Szb(G_{r}^c),\]
then (\refeq{initcen}) is simplified to:
\begin{align}
   [A,X]_{i,j}&= \sum_{k\in N_G[i]\cap N_{G_r}[j]^c} a_{ik}x_{kj}-
        \sum_{k\in N_G[j] \cap N_{G_r}[i]^c} a_{jk}x_{ik}=0. \label{ceneqn}
\end{align}
If this process completes with $G_r=K_n$, we can conclude that $X\in\Szb(K_n^c)$ has to be the zero matrix and therefore $A$ has the SSP.  Notice that this argument is independent of the choice of $A\in\S(G)$, so this process, if it happens, guarantees $G\in\GG$.

To be able to identify equations that can help in this process we need to know how many terms or which terms occur in \eqref{ceneqn}.

\begin{definition}
Let $G$ be a graph and $G_l$ a supergraph of $G$ of the same order.  Let $i,j\in V(G)$ and $U\subseteq V(G)$ a nonempty set.  The pair $\{i,j\}$ is said to be \emph{focused} on $U$ 
with respect to $G$ and $G_l$ if 
\[N_G[i]\cap N_{G_l}[j]^c \subseteq U \text{ and } N_G[j] \cap N_{G_l}[i]^c\subseteq U.\]
\end{definition}

\begin{example}\label{ex:focused}
Suppose $G$ is the lollipop graph $L_{3,2}$ with 5 vertices as shown on Figure \ref{fig:L32}.
\begin{center}
 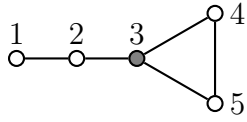
\begin{figure}[htb!]
 \centering
   \begin{tikzpicture}
	\node [label={above:$1$}] (1) at (-2,0) {};
	\node [label={above:$2$}] (2) at (-1,0) {};
	\node [fill=gray,label={above:$3$}] (3) at (0:0) {};
	\node [label={right:$4$}] (4) at (30:1.5) {};
	\node [label={right:$5$}] (5) at (330:1.5) {};
	\draw (3) -- (4) -- (5) -- (3)--(2) -- (1);
    \end{tikzpicture} 
    \caption{Lollipop graph $L_{3,2}$}\label{fig:L32}
    \end{figure}
    \end{center}
Note that 
$$N_G[1]\cap N_{G}[2]^c =\emptyset \text{ and } N_G[2] \cap N_{G}[1]^c=\{3\}$$
and thus $\{1,2\}$ is focused on $\{3\}$ with respect to $G$.
\end{example}

%

If a pair $\{i,j\}$ is focused on $U$, then the equation \eqref{initcen} simplifies to:
$$\sum\limits_{k\in U} (a_{ik}x_{kj}-a_{jk}x_{ik})=0.$$
In particular, if a pair of vertices is focused on a single vertex, like in Example \ref{ex:focused}, we obtain the following result.

 \begin{lemma}\label{thm:rule1}
Let $G$ be a graph and $G_l$ a supergraph of $G$ of the same order.  Suppose $A\in\S(G)$, 
$X\in\Szb(G_l^c)$, and $[A,X]=0$.  If for some distinct vertices $i$, $j$, and $k \in N_G[i]$ the pair 
$\{i,j\}$ is focused on $\{k\}$ with respect to $G$ and $G_l$, 
then $X\in\Szb(G_{l+1}^c)$ with $G_{l+1} = G_l + \{j,k\}$.
\end{lemma}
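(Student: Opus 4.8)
The only genuinely new information in the conclusion is that $x_{jk}=0$: all other vanishing entries of $X$ are already recorded in the hypothesis $X\in\Szb(G_l^c)$. Consequently, if $\{j,k\}$ is already an edge of $G_l$, then $x_{jk}=0$ holds by assumption and $G_{l+1}=G_l+\{j,k\}=G_l$, so there is nothing to prove. The plan is therefore to assume $\{j,k\}\notin E(G_l)$ and extract $x_{jk}=0$ from the single commutator equation $[A,X]_{i,j}=0$.

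First I would write this equation in the reduced form \eqref{ceneqn}, whose two sums range over $N_G[i]\cap N_{G_l}[j]^c$ and $N_G[j]\cap N_{G_l}[i]^c$ respectively. Since $\{i,j\}$ is focused on $\{k\}$, both index sets are contained in $\{k\}$, so the only summation index that can survive in either sum is the fixed vertex $k$, and the equation collapses to at most $a_{ik}x_{kj}-a_{jk}x_{ik}=0$.

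Next I would dispose of the second term. Because $i,j,k$ are distinct and $k\in N_G[i]$, we have $\{i,k\}\in E(G)\subseteq E(G_l)$, hence $k\in N_{G_l}[i]$ and so $k\notin N_{G_l}[i]^c$. Thus $k$ does not even index the second sum, which is therefore empty (equivalently, $x_{ik}=0$ since $\{i,k\}\in E(G_l)$). What remains is $a_{ik}x_{kj}=0$. Finally $\{i,k\}\in E(G)$ forces $a_{ik}\neq 0$ by the definition of $\S(G)$, whence $x_{kj}=x_{jk}=0$. Together with $X\in\Szb(G_l^c)$ this yields $X\in\Szb(G_{l+1}^c)$ for $G_{l+1}=G_l+\{j,k\}$.

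I do not expect a real obstacle, as the argument is bookkeeping with closed neighbourhoods; the one point to highlight is the double role of the hypothesis $k\in N_G[i]$. This single fact simultaneously guarantees $a_{ik}\neq 0$ (so the surviving term can be cancelled) and $\{i,k\}\in E(G_l)$, i.e.\ $x_{ik}=0$ (so the competing term drops out), and it is precisely this coincidence that makes focusing on a \emph{single} vertex strong enough to force a new zero entry of $X$.
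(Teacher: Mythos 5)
Your proof is correct and takes essentially the same route as the paper: both arguments collapse the single commutator equation $[A,X]_{i,j}=0$ via the focusing hypothesis to $a_{ik}x_{kj}=0$ and conclude from $a_{ik}\neq 0$. If anything, you are more explicit than the paper's terse ``without loss of generality'' step, since you spell out that $k\in N_G[i]$ forces $N_G[j]\cap N_{G_l}[i]^c=\emptyset$ and you handle the degenerate case $\{j,k\}\in E(G_l)$ (where the surviving term would be absent) separately at the outset.
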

\begin{proof}
Since $\{i,j\}$ is focused on $\{k\}$ and $E(G) \subseteq E(G_l)$, we can assume without loss of  generality  that 
$$ N_{G}[i]\cap N_{G_l}[j]^c=\{k\} \text{ and } N_G[j] \cap N_{G_l}[i]^c=\emptyset.$$ By \eqref{ceneqn} we have
\[ [A,X]_{i,j} =a_{ik}x_{kj}=0. \]
Since $k\in N_G[i]$ and $a_{ik}\neq 0$, it follows that  $x_{kj}=0$.
\end{proof}
 
\begin{definition}
When the conditions of Lemma  \ref{thm:rule1} are satisfied and we update $G_l$ with $G_{l+1}$,  we say that the edge $\{i,j\}$ in $G$ forces the edge $ \{j,k\}$ to be added to $G_l$, and we denote this by 
$$\{i,j\}\forces{G}{G_l} \{j,k\}.$$  Notice that this notion is with respect to $G$ and $G_l$.  When the context is clear, we also write $\{i,j\}\rightarrow \{j,k\}$.  Also note that both $\{i,j\}$ and $\{j,k\}$ are unordered pairs, and $j$ is the element that occurs in both sets.
\end{definition}

We will give several applications of Lemma~\ref{thm:rule1} in Sections  \ref{sec:trees} and \ref{SSPgraphs}, at this point we offer a single example for illustration. 

\begin{example}
 Let us revisit the graph $G=G_0=L_{3,2}$ shown in Figure \ref{fig:L32}. 
 
 Since $\{1,2\}$ is focused on $\{3\}$ with respect to $G$ (see Example \ref{ex:focused}), we have $\{1,2\}\forces{G}{G} \{1,3\}$, and we define $G_1=G+\{1,3\}$. (See Figure \ref{fig:G0}.)
 
 \begin{center}
 \begin{figure}[htb!]
 \centering
 \subfigure	[$G=G_0$]{
      \begin{tikzpicture}
	\node [label={above:$1$}] (1) at (-2,0) {};
	\node [label={above:$2$}] (2) at (-1,0) {};
	\node [label={above:$3$}] (3) at (0:0) {};
	\node [label={right:$4$}] (4) at (30:1.5) {};
	\node [label={right:$5$}] (5) at (330:1.5) {};
	\draw (3) -- (4) -- (5) -- (3)--(2) -- (1);
    \end{tikzpicture} \label{fig:G0}}
\subfigure	[$G_1$]{
      \begin{tikzpicture}
	\node [label={above:$1$}] (1) at (-2,0) {};
	\node [label={above:$2$}] (2) at (-1,0) {};
	\node [label={above:$3$}] (3) at (0:0) {};
	\node [label={right:$4$}] (4) at (30:1.5) {};
	\node [label={right:$5$}] (5) at (330:1.5) {};
	\draw (3) -- (4) -- (5) -- (3)--(2) -- (1);
	\draw[color=green] (3) to[bend left] (1);
    \end{tikzpicture} \label{fig:G1}}
\subfigure	[$G_2$]{
      \begin{tikzpicture}
	\node [label={above:$1$}] (1) at (-2,0) {};
	\node [label={above:$2$}] (2) at (-1,0) {};
	\node [label={above:$3$}] (3) at (0:0) {};
	\node [label={right:$4$}] (4) at (30:1.5) {};
	\node [label={right:$5$}] (5) at (330:1.5) {};
	\draw (3) -- (4) -- (5) -- (3)--(2) -- (1);
	\draw[color=gray] (3) to[bend left] (1);
	\draw[color=green] (4) to[bend right] (2);
    \end{tikzpicture} \label{fig:G2}}
\subfigure	[$G_3$]{
      \begin{tikzpicture}
	\node [label={above:$1$}] (1) at (-2,0) {};
	\node [label={above:$2$}] (2) at (-1,0) {};
	\node [label={above:$3$}] (3) at (0:0) {};
	\node [label={right:$4$}] (4) at (30:1.5) {};
	\node [label={right:$5$}] (5) at (330:1.5) {};
	\draw (3) -- (4) -- (5) -- (3)--(2) -- (1);
	\draw[color=gray] (3) to[bend left] (1);
	\draw[color=gray] (4) to[bend right] (2);
	\draw[color=green] (5) to[bend left] (2);
    \end{tikzpicture} \label{fig:G3}}
    \subfigure	[$G_4$]{
      \begin{tikzpicture}
	\node [label={above:$1$}] (1) at (-2,0) {};
	\node [label={above:$2$}] (2) at (-1,0) {};
	\node [label={above:$3$}] (3) at (0:0) {};
	\node [label={right:$4$}] (4) at (30:1.5) {};
	\node [label={right:$5$}] (5) at (330:1.5) {};
	\draw (3) -- (4) -- (5) -- (3)--(2) -- (1);
	\draw[color=gray] (3) to[bend left] (1);
	\draw[color=gray] (4) to[bend right] (2);
	\draw[color=gray] (5) to[bend left] (2);
	\draw[color=green] (4) to[bend right] (1);
    \end{tikzpicture} \label{fig:G4}}
    \subfigure	[$G_5$]{
      \begin{tikzpicture}
	\node [label={above:$1$}] (1) at (-2,0) {};
	\node [label={above:$2$}] (2) at (-1,0) {};
	\node [label={above:$3$}] (3) at (0:0) {};
	\node [label={right:$4$}] (4) at (30:1.5) {};
	\node [label={right:$5$}] (5) at (330:1.5) {};
	\draw (3) -- (4) -- (5) -- (3)--(2) -- (1);
	\draw[color=gray] (3) to[bend left] (1);
	\draw[color=gray] (4) to[bend right] (2);
	\draw[color=gray] (5) to[bend left] (2);
	\draw[color=gray] (4) to[bend right] (1);
	\draw[color=green] (5) to[bend left] (1);
    \end{tikzpicture} \label{fig:G5}}
\caption{Sequence of graphs $G=G_0 \subseteq G_1 \subseteq \ldots \subseteq G_5=K_5$, defined in Lemma \ref{thm:rule1}. The new edges in $G_{k+1}$ not in $G_{k}$ are colored green.}
    \end{figure}
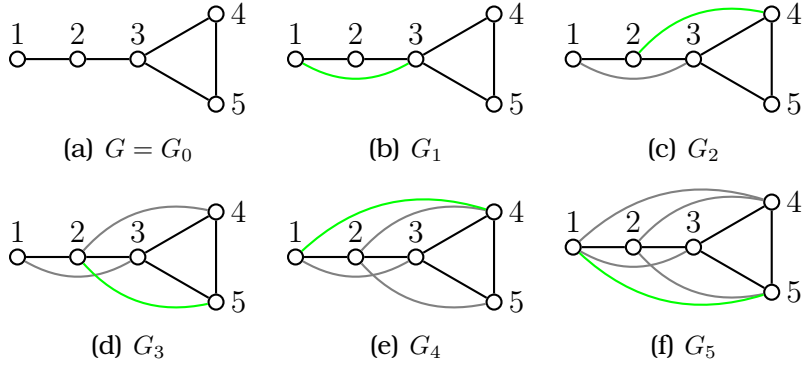
    \end{center}
    
    Now, $\{3,4\}$ is focused on $\{2\}$  with respect to $G$ and $G_1$ since
     $$N_G[3]\cap N_{G_1}[4]^c =\{2\} \text{ and } N_G[4]\cap N_{G_1}[3]^c =\emptyset.$$
     By Lemma \ref{thm:rule1} we have  $\{3,4\}\forces{G}{G_1} \{2,4\}$ and $G_2=G_1+\{2,4\}$. Similarly $\{3,5\}$ is focused on $\{2\}$  with respect to $G$ and $G_1$ and
     $\{3,5\}\forces{G}{G_1} \{2,5\}$ and so we may define $G_3=G_2+\{2,5\}$. (See Figures \ref{fig:G2} and \ref{fig:G3}.)
     
     Observe that $\{2,4\}$ is focused on $\{1\}$  with respect to $G$ and $G_3$ since
     $$N_G[2]\cap N_{G_3}[4]^c =\{1\} \text{ and } N_G[4]\cap N_{G_3}[2]^c =\emptyset.$$
     Therefore  $\{2,4\}\forces{G}{G_3} \{1,4\}$ and $G_4=G_3+\{1,4\}$. Similarly $\{2,5\}$ is focused on $\{1\}$  with respect to $G$ and $G_3$ and
     $\{2,5\}\forces{G}{G_3} \{1,5\}$ and so we define $G_5=G_4+\{1,5\}=K_5$. (See Figures \ref{fig:G4} and \ref{fig:G5}.)

    Using Lemma \ref{thm:rule1}  we have proved that $X\in\Szb(G_{5}^c)=\Szb(5 K_1)$, hence $X=0$. This implies that every matrix $A\in \S(L_{3,2})$ has the SSP, hence $L_{3,2}\in \GG$.
\end{example}

Lemma \ref{thm:rule1} identifies a very simple condition. As illustrated in the example above, its complexity comes from carrying it over several steps. One of the most straightforward examples, where the result can be useful are induced paths in graphs.  

\begin{corollary}\label{cor:rule1cor}
Let $G$ be a graph and $G_l$ a supergraph of $G$ of the same order.  Suppose a path $P$ is an induced subgraph of $G$ with vertices labeled as $p_1,\ldots,p_n$ in the path order such that $E(P^d)\subseteq E(G_l)$ for some $d$. If $\{p_{i},p_{i+d}\}$ is focused on $V(P)$
for $i=1,\ldots,n-d-1$ with respect to $G$ and $G_l$, then 
\[\{p_i,p_{i+d}\}\rightarrow \{p_i,p_{i+d+1}\}\]
may occur sequentially and the resulting graph $G_{l_1}$ is 
$$G_{l_1}=G_l+ \{\{p_i,p_{i+d+1}\}; 1 \leq i \leq n-d-1\}.$$
\end{corollary}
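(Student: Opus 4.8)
The plan is to carry out the forcings in increasing order of $i$ and to show by induction on $i$ that each is a genuine application of Lemma~\ref{thm:rule1}. Write $G_l^{(i)}$ for the supergraph reached after the forcings with indices $1,\dots,i-1$ have been performed, so $G_l^{(1)}=G_l$ and $G_l^{(i)}=G_l+\{\{p_{i'},p_{i'+d+1}\}:1\le i'\le i-1\}$. The goal at step $i$ is to verify the hypotheses of Lemma~\ref{thm:rule1} for the pair $\{p_{i+d},p_i\}$ (playing the roles of $i,j$) and the vertex $k=p_{i+d+1}$ with respect to $G$ and $G_l^{(i)}$; the lemma then legitimises $\{p_i,p_{i+d}\}\rightarrow\{p_i,p_{i+d+1}\}$ and sets $G_l^{(i+1)}=G_l^{(i)}+\{p_i,p_{i+d+1}\}$. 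Running $i$ from $1$ to $n-d-1$ yields exactly the claimed $G_{l_1}$. (If some target edge already lies in $G_l^{(i)}$ the corresponding step is vacuous and does not affect the final union, so we may assume at each step that $\{p_i,p_{i+d+1}\}$ is not yet present.)

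Two things must be checked: that $k=p_{i+d+1}\in N_G[p_{i+d}]$, and that $\{p_{i+d},p_i\}$ is focused on the single vertex $\{p_{i+d+1}\}$ with respect to $G$ and $G_l^{(i)}$. The first is immediate because $p_{i+d}$ and $p_{i+d+1}$ are consecutive on the path $P\subseteq G$ (and for $d\ge 1$ the three vertices $p_i,p_{i+d},p_{i+d+1}$ are distinct). For the focusing, I first reduce to $V(P)$: by hypothesis $\{p_i,p_{i+d}\}$ is focused on $V(P)$ with respect to $G$ and $G_l$, and since $G_l^{(i)}\supseteq G_l$ only enlarges closed neighbourhoods — hence only shrinks their complements — this focusing on $V(P)$ persists with respect to $G_l^{(i)}$. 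Consequently every vertex outside $V(P)$ is already accounted for, and because $P$ is induced I only need to examine $N_G[p_{i+d}]\cap V(P)=\{p_{i+d-1},p_{i+d},p_{i+d+1}\}$ and $N_G[p_i]\cap V(P)=\{p_{i-1},p_i,p_{i+1}\}$.

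Both computations use that $E(P^d)\subseteq E(G_l)\subseteq E(G_l^{(i)})$ makes $p_a,p_b$ adjacent in $G_l^{(i)}$ whenever $0<|a-b|\le d$. For $N_G[p_{i+d}]\cap N_{G_l^{(i)}}[p_i]^c$, the vertices $p_{i+d-1}$ and $p_{i+d}$ are within path-distance $d$ of $p_i$ and drop out, leaving precisely $\{p_{i+d+1}\}$ (which is still in the complement since, as noted, its edge to $p_i$ has not yet been added). For $N_G[p_i]\cap N_{G_l^{(i)}}[p_{i+d}]^c$, the vertices $p_i,p_{i+1}$ are within path-distance $d$ of $p_{i+d}$ and drop out; the only delicate vertex is $p_{i-1}$, which sits at path-distance $d+1$ from $p_{i+d}$ and is therefore not reached by $E(P^d)$. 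Here the induction and the chosen order do the work: the edge $\{p_{i-1},p_{i+d}\}=\{p_{i-1},p_{(i-1)+d+1}\}$ was added exactly at step $i-1$, so $p_{i-1}\in N_{G_l^{(i)}}[p_{i+d}]$ and this set is empty (for $i=1$ the vertex $p_{i-1}$ does not exist, which gives the base case). Thus $\{p_{i+d},p_i\}$ is focused on $\{p_{i+d+1}\}$ and Lemma~\ref{thm:rule1} applies.

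The single genuine obstacle is the backward neighbour $p_{i-1}$: at path-distance $d+1$ it is not covered by $P^d$, and without the edge supplied by the previous forcing the focusing set would not collapse to a single vertex. The entire argument therefore rests on processing the indices in increasing order and propagating each newly added edge through the induction.
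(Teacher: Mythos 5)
Your proof is correct and takes essentially the same route as the paper's: an induction that processes the forcings in increasing order of $i$, uses the hypothesis that $\{p_i,p_{i+d}\}$ is focused on $V(P)$ to restrict attention to vertices of $P$, and invokes the edge added at the previous step to eliminate the backward neighbour $p_{i-1}$. The only difference is one of detail: you spell out explicitly (including the persistence of focusing under enlarging $G_l$ and the role of $E(P^d)\subseteq E(G_l)$) the verification that the paper dismisses as ``straightforward to verify.''
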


\begin{proof}
Suppose some of $\{p_i,p_{i+d}\}\rightarrow\{p_i,p_{i+d+1}\}$ has been applied so that the current $G_{l'}$ contains the edges $\{p_i,p_{i+d+1}\}$ with $i=1,\ldots, s-1$ for some $s$.  If $\{p_s,p_{s+d+1}\}\notin G_{l'}$, we show that 
\[\{p_s,p_{s+d}\}\forces{G}{G_{l'}} \{p_s,p_{s+d+1}\}.\]
 
Since $\{p_s,p_{s+d}\}$ is focused on $V(P)$, we only need to consider the sets 
\[N_G[p_s]\cap N_{G_{l'}}[p_{s+d}]^c\cap V(P)\text{ and }N_G[p_{s+d}] \cap N_{G_{l'}}[p_s]^c\cap V(P).\]
It is straightforward to verify that the first set is empty and that the second set contains only the vertex $p_{i+d+1}$.  By Lemma~\ref{thm:rule1}, the force $\{p_s,p_{s+d}\}\forces{G}{G_{l'}} \{p_s,p_{s+d+1}\}$ is valid.  Inductively, we get the desired result.
\end{proof}

\begin{corollary}\label{cor:corofcor}
 Let $G$ be a supergraph of a graph $H$, which is obtained from $H$ by adding a path with vertices $v=v_0, v_1,\ldots,v_m$ to a vertex $v \in V(H)$.  If $A\in\S(G)$, $X\in\Szb(G^c)$, and $[A,X]=0$, then $X\in\Szb(G_{l+1}^c)$ with $G_{l+1} = G_l + \{\{v_i,v_j\}\colon 0 \leq i < j \leq m\}$.
\end{corollary}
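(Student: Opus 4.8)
The plan is to realise the attached path $P$ with vertices $v_0,v_1,\ldots,v_m$ as an induced subgraph of $G$ and then apply Corollary~\ref{cor:rule1cor} repeatedly, once for each distance $d=1,2,\ldots,m-1$, building $E(P^1)\subseteq E(P^2)\subseteq\cdots\subseteq E(P^m)$ inside the evolving supergraph; since the path $P_{m+1}$ has diameter $m$, we have $P^m=K_{m+1}$ on $V(P)$, which accounts for all the edges $\{v_i,v_j\}$.

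First I would record the structural facts that make the path amenable. Since $v_1,\ldots,v_m$ are the new vertices of a pendant path, each of them has all of its $G$-neighbours inside $V(P)$, that is $N_G[v_a]\subseteq V(P)$ for every $a\geq 1$; only the attachment vertex $v_0=v$ has neighbours outside $V(P)$. Consequently, for any supergraph $G_l$ of $G$ on the same vertex set and any $a,b\in\{1,\ldots,m\}$, the pair $\{v_a,v_b\}$ is automatically focused on $V(P)$ with respect to $G$ and $G_l$, because both $N_G[v_a]$ and $N_G[v_b]$ already lie in $V(P)$. Moreover $P$ is induced in $G$, since $v_1,\ldots,v_m$ contribute no chords and $v_0$ meets $\{v_1,\ldots,v_m\}$ only in $v_1$.

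The crucial choice, and the main obstacle to a naive application, is the orientation of the path. If one labels $p_i=v_{i-1}$, then Corollary~\ref{cor:rule1cor} would demand that $\{p_1,p_{1+d}\}=\{v_0,v_d\}$ be focused on $V(P)$; but $v_0$ carries neighbours $w\in V(H)\setminus V(P)$ that never become adjacent to the interior vertex $v_d$ in the supergraphs we build, so $w\in N_G[v_0]\cap N_{G_l}[v_d]^c\setminus V(P)$ and the focusing condition fails. I would therefore reverse the labelling, setting $p_i=v_{m+1-i}$, so that the troublesome vertex $v_0$ becomes the last vertex $p_n$ with $n=m+1$. For each $d$ the focusing pairs $\{p_i,p_{i+d}\}$ range over $i=1,\ldots,n-d-1$, and since $i+d\leq n-1$, none of these pairs contains $p_n=v_0$; every focusing pair consists of vertices $v_a$ with $a\geq 1$, which by the previous paragraph are focused on $V(P)$ for free. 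The vertex $v_0$ only ever appears as the forced endpoint $p_{i+d+1}$, which is legitimate because Lemma~\ref{thm:rule1} merely requires $p_{i+d+1}\in N_G[p_{i+d}]$, and here the single problematic instance $p_{i+d+1}=v_0$ is the path-neighbour of $p_{i+d}=v_1$.

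With the orientation fixed, I would run the induction on $d$: starting from $E(P^1)=E(P)\subseteq E(G)$, each application of Corollary~\ref{cor:rule1cor} with parameter $d$ adds all edges $\{p_i,p_{i+d+1}\}$ and hence upgrades $E(P^d)$ to $E(P^{d+1})$ in the current supergraph. After the step $d=n-2=m-1$ we reach $E(P^{\,n-1})$, and since $P^{\,n-1}=K_{m+1}$ on $V(P)$, every pair $\{v_i,v_j\}$ with $0\leq i<j\leq m$ has been forced. This yields $X\in\Szb(G_{l+1}^c)$ for $G_{l+1}=G_l+\{\{v_i,v_j\}:0\leq i<j\leq m\}$, as claimed. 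The only genuinely delicate point is the orientation argument of the third paragraph; the remaining bookkeeping is already packaged inside Corollary~\ref{cor:rule1cor}.
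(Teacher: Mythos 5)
Your proof is correct and takes essentially the paper's intended route: the paper states this corollary without a written proof, as an immediate consequence of iterating Corollary~\ref{cor:rule1cor} over $d=1,\ldots,m-1$, which is exactly what you do. Your key observation --- reversing the labelling so that the attachment vertex $v_0$ is the \emph{last} vertex $p_n$, so it never appears in a focusing pair (where its $H$-neighbours would break the focusing condition) but only as a forced endpoint --- is precisely the detail needed to apply Corollary~\ref{cor:rule1cor} legitimately, and it matches how the paper itself invokes this corollary (e.g., the paths $P_{u_1,z}$, $P_{v_1,z}$, $P_{w_1,z}$ in the proof of Theorem~\ref{thm:trees}, always oriented with the branch vertex $z$ last).
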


While Lemma~\ref{thm:rule1} identifies an equation in $[A,X]=0$ with a single surviving term, the next two rules explore a set of equations with two surviving terms. 

\begin{lemma}\label{thm:rule2}
Let $G$ be a graph and $G_l$ a supergraph of $G$ of the same order.  Suppose $A\in\S(G)$, $X\in\Szb(G_l^c)$, and $[A,X]=0$.  If there exists a vertex $i$ such that $G_l^c[N_G(i)]$ contains a component that is an odd cycle $C$ and for each $j\in V(C)$ the pair $\{i,j\}$ is focused on $V(C)$ with respect to $G$ and $G_l$, then $X\in\Szb(G_{l+1}^c)$ with $G_{l+1}=G_l + E(C)$.
\end{lemma}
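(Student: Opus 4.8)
The plan is to extract from the hypotheses a single homogeneous linear system whose unknowns are exactly the entries $x_{uv}$ with $\{u,v\}\in E(C)$, and then to show that its coefficient matrix is invertible precisely because $C$ has odd length. Write the cycle as $C=j_1j_2\cdots j_nj_1$ with $n$ odd, where every $j_s\in N_G(i)$, and set $b_s=a_{i,j_s}$; since each $j_s$ is a neighbour of $i$ in $G$, we have $b_s\neq 0$. Denoting the unknown cycle entries by $y_s=x_{j_s,j_{s+1}}$ (indices read modulo $n$), the desired conclusion $X\in\Szb(G_{l+1}^c)$ with $G_{l+1}=G_l+E(C)$ is equivalent to $y_1=\cdots=y_n=0$.

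Next I would write out the focused equations. For each $j_s\in V(C)$ the pair $\{i,j_s\}$ is focused on $V(C)$, so \eqref{ceneqn} for the index pair $(i,j_s)$ ranges only over $k\in V(C)$. In the second sum the index set lies in $V(C)$ by focusing, yet every $k\in V(C)$ satisfies $\{i,k\}\in E(G)\subseteq E(G_l)$, i.e. $k\in N_{G_l}[i]$; hence no such $k$ lies in $N_{G_l}[i]^c$, the index set is empty, and the second sum vanishes. In the first sum, because $C$ is a component of $G_l^c[N_G(i)]$ that is an induced cycle, the only $k\in V(C)$ with $\{k,j_s\}\in E(G_l^c)$ are the two cycle-neighbours $j_{s-1}$ and $j_{s+1}$ of $j_s$. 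Consequently \eqref{ceneqn} collapses to
\[ b_{s-1}\,y_{s-1}+b_{s+1}\,y_s=0,\qquad s=1,\dots,n, \]
a cyclic two-term system in $y_1,\dots,y_n$ (this is the promised ``two surviving terms'' situation).

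Finally I would solve this cyclic system. Since $b_{s+1}\neq 0$, each equation gives $y_s=-(b_{s-1}/b_{s+1})\,y_{s-1}$, so either all the $y_s$ vanish or none does. Assuming none vanishes and multiplying the $n$ relations around the cycle, the left-hand product telescopes to $1$, while the right-hand product equals $(-1)^n\bigl(\prod_s b_{s-1}\bigr)/\bigl(\prod_s b_{s+1}\bigr)=(-1)^n$; for $n$ odd this forces $1=-1$, a contradiction, so $y_1=\cdots=y_n=0$. Equivalently, the coefficient matrix is a cyclic bidiagonal matrix whose only contributing permutations are the identity and the single $n$-cycle, yielding determinant $\bigl(1+(-1)^{n-1}\bigr)\prod_s b_s=2\prod_s b_s\neq 0$. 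I expect the crux to be exactly this determinant/parity computation: it is the oddness of $C$ that makes the two permutation contributions reinforce rather than cancel, so the same scheme genuinely fails for an even cycle, matching the hypothesis. The bookkeeping that identifies which neighbours survive in \eqref{ceneqn} is routine once the focused condition and the component structure of $G_l^c[N_G(i)]$ are invoked.
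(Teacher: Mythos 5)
Your proposal is correct and follows essentially the same route as the paper's proof: the focusing hypothesis plus the component structure of $G_l^c[N_G(i)]$ reduce the commutator equations at positions $(i,j_s)$ to the cyclic two-term system, whose coefficient matrix has determinant $2\prod_s a_{ij_s}\neq 0$ exactly because the cycle is odd. Your telescoping-product argument is just a hands-on restatement of that same determinant computation, so there is nothing substantively different to compare.
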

 
\begin{proof}
Suppose the odd cycle $C$ is of length $s$.  Label the vertices of $C$ as $j_1,j_2,\ldots ,j_s$. Since $\{i,j_m\}$ is focused on $V(C)$ for each $j_m\in V(C)$, we have 
\[N_{G}[j_m]\cap N_{G_l}[i]^c = N_{G}[j_m]\cap N_{G_l}[i]^c\cap V(C) = \emptyset \]
and
\[N_G[i]\cap N_{G_l}[j_m]^c= N_G[i]\cap N_{G_l}[j_m]^c\cap V(C) = \{j_{m+1},j_{m-1}\},\] 
where $j_{m+1}=j_1$ if $m=s$ and $j_{m-1}=j_s$.  Equation \eqref{ceneqn} is of the form:
\begin{align*}
   0&=[A,X]_{i,j_m}
   =a_{ij_{m-1}}x_{j_{m-1}j_m}+a_{ij_{m+1}}x_{j_{m+1}j_m},
\end{align*}
which implies 
\[\begin{bmatrix}
a_{ij_1} & a_{ij_3} & 0 & \cdots & 0 \\
0 & a_{ij_2} & a_{ij_4} & \ddots & \vdots \\
\vdots & \ddots & \ddots & \ddots & 0 \\
0 & \cdots & 0 & a_{ij_{s-1}} & a_{ij_1} \\
a_{ij_2} & 0 & \cdots & 0 & a_{ij_s} \\
\end{bmatrix}
\begin{bmatrix}
x_{j_1j_2}\\
x_{j_2j_3}\\
\vdots \\
 \\
x_{j_sj_1}
\end{bmatrix}=\begin{bmatrix}
0\\
0\\
\vdots \\
 \\ 
0
\end{bmatrix}.\]
Since $s$ is odd, the matrix on the left hand side has the determinant equal to
\[2\prod_{m=1}^s  a_{ij_m}\neq 0.\]
In particular, this implies $x_{j_mj_{m+1}}=0$ for $m=1,\ldots ,s$, proving that  $X\in\Szb(G_{l+1}^c)$ with $G_{l+1} = G + E(C)$.
\end{proof}

\begin{definition}
When the conditions of Lemma~\ref{thm:rule2} are satisfied and we update $G_l$ to $G_{l+1}$, and we say \emph{the vertex $i$ forces the cycle $C$}. We denote this by $$i\forces{G}{G_l} C.$$ 
\end{definition}

\begin{example}
For $n\geq 4$ and $G=K_{n-3}\vee(3K_1)=K_n - C_3$, let $i$ be any vertex in $K_{n-3}$ and $C$ be the  3-cycle in $G^c.$ By Lemma~\ref{thm:rule2} we have $i \forces{G}{G} C$, hence $G_1=K_n$, and so
$ K_n - C_3 \in \GG$.
\end{example}

Recall that $G^r$ and $G^{(r)}$ are the $r$-th power and the $r$-th strong power of the graph $G$, respectively.

\begin{lemma}\label{thm:rule3}
Let $G$ be a graph and $G_l$ a supergraph of $G$ of the same order.  Suppose $A\in\S(G)$, $X\in\Szb(G_l^c)$, and $[A,X]=0$.  Let $Y_h$, $h \geq 1$, be the graph obtained from three copies of the path $P_h$ by adding a vertex that is joined to an endpoint to each of the three paths.  If $Y_h$ is an induced subgraph of $G$ such that  
\begin{enumerate}
\item $E(Y_h^{(h)})\subseteq E(G_l)$ and $E(Y_h^{(h+1)})\cap E(G_l)=\emptyset$, and 
\item all $\{u,v\}\in E(Y_h^{(h)})$ are focused on $V(Y_h)$ with respect to $G$ and $G_l$,
\end{enumerate}
then $X\in\Szb(G_{l+1}^c)$ with $G_{l+1}=G_l +E(Y^{(h+1)})$.\end{lemma}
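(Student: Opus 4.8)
The plan is to follow the strategy of Lemma~\ref{thm:rule2}: assemble the equations of the form \eqref{ceneqn} coming from the focused pairs in $E(Y_h^{(h)})$, read them as a homogeneous linear system whose unknowns are the entries of $X$ indexed by the distance-$(h+1)$ pairs of $Y_h$, and show that the coefficient matrix is nonsingular for every $A\in\S(G)$.

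I would first fix notation for $Y_h$: write $c$ for the added vertex and label the three legs so that the $k$-th leg is $v_{k,1},\ldots,v_{k,h}$ with $c$ adjacent to $v_{k,1}$ and $v_{k,i}$ adjacent to $v_{k,i+1}$. Then, inside $Y_h$, the distance from $c$ to $v_{k,i}$ is $i$, two vertices $v_{k,i}$ and $v_{k',j}$ on different legs are at distance $i+j$, and two vertices on the same leg are at distance $|i-j|$. Hence the pairs of $V(Y_h)$ at distance exactly $h+1$ are precisely the cross-leg pairs $\{v_{k,i},v_{k',j}\}$ with $k\neq k'$ and $i+j=h+1$; the corresponding entries $x_{v_{k,i},v_{k',j}}$ are the unknowns to be shown zero. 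Since $X\in\Szb(G_l^c)$ and, by the hypotheses, every pair of $V(Y_h)$ at distance at most $h$ belongs to $E(G_l)$, all entries of $X$ indexed by distance-$(\le h)$ pairs of $Y_h$ vanish.

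Next I would extract the two families of equations. Restricting \eqref{ceneqn} to $V(Y_h)$ via the focusing hypothesis and deleting the vanishing (distance-$\le h$) terms, each center--tip pair $\{c,v_{k,h}\}$ yields $\sum_{k'\neq k}a_{c,v_{k',1}}\,x_{v_{k',1},v_{k,h}}=0$, and each cross-leg pair $\{v_{k,i},v_{k',j}\}$ with $i+j=h$ yields the two-term relation $a_{v_{k,i},v_{k,i+1}}\,x_{v_{k,i+1},v_{k',j}}=a_{v_{k',j},v_{k',j+1}}\,x_{v_{k,i},v_{k',j+1}}$ linking two consecutive distance-$(h+1)$ entries on the same pair of legs. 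The second family forms, for each fixed pair of legs, a chain that expresses all $h$ unknowns attached to those legs as nonzero scalar multiples of a single seed unknown; substituting these into the three center--tip equations collapses the whole system to a $3\times3$ homogeneous system in the three seeds.

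The crux, exactly as in Lemma~\ref{thm:rule2}, is to show this reduced $3\times3$ system is nonsingular for all admissible (nonzero) off-diagonal entries of $A$. I would compute its determinant directly: the within-leg chains contribute factors equal to products of consecutive edge weights along each leg, and because the number of legs is odd (three) the two contributions add rather than cancel, giving a determinant of the shape $\pm\,2\,\bigl(\prod_{k}a_{c,v_{k,1}}\bigr)\cdot(\text{a ratio of leg-weight products})$, which is nonzero. This forces every seed, and hence every distance-$(h+1)$ entry of $X$, to be zero, so $X\in\Szb(G_{l+1}^c)$ with $G_{l+1}=G_l+E(Y_h^{(h+1)})$. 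I expect the genuinely delicate part to be the bookkeeping that guarantees only distance-$(h+1)$ entries survive in each equation: this is where the hypothesis $E(Y_h^{(h+1)})\cap E(G_l)=\emptyset$ (so those entries are free) and the fact that all shorter pairs of $V(Y_h)$ already lie in $G_l$ (so their entries vanish) are both essential, while securing the odd-parity noncancellation in the determinant is the main obstacle.
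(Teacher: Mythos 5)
Your proposal is correct and takes essentially the same approach as the paper: both read off exactly the same $3h$ two-term equations of the form \eqref{ceneqn} (one family at the centre--tip pairs, the other at the cross-leg pairs at distance $h$) and treat them as a square homogeneous system in the $3h$ distance-$(h+1)$ entries of $X$, concluding via nonsingularity of the coefficient matrix. The only difference is presentational: the paper writes down the full $3h\times 3h$ cyclic bidiagonal matrix $M$ and asserts $\det(M)\neq 0$, whereas you eliminate along the within-leg chains to a $3\times 3$ system and compute its determinant $\pm 2\,a_{v_0v^1_1}a_{v_0v^2_1}a_{v_0v^3_1}$ times a ratio of leg-weight products explicitly --- which in fact supplies the verification the paper leaves implicit (note that both arguments equally rely on reading hypothesis (1) as placing all pairs of $V(Y_h)$ at distance at most $h$, not only exactly $h$, inside $E(G_l)$, as happens in the paper's application to trees).
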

\begin{proof}
Let $P_{(i)}$ be a path on $h$ vertices $v^i_1,\ldots ,v^i_h$ for $i=1,2,3$, and suppose $Y_h$ is constructed from $P_{(1)}\dunion P_{(2)}\dunion P_{(3)}$ by adding a new vertex $v_0$ joined to $v^1_1$, $v^2_1$, and $v^3_1$.  Let $\alpha$ be an ordered set of $E(Y_h^{(h)})$: 
\begin{align*}
\alpha=\{ &\{v_0,v^1_h\}, \{v^2_1, v^1_{h-1}\},\ldots, \{v_0,v^2_h,\},
         \{v^3_1,v^2_{h-1}\},\ldots, \{v_0,v^3_h\},\\
        & \{v^1_1v^3_{h-1}\},\ldots,\{v^1_{h-1}v^3_1\}\},
\end{align*}
and let us name the edges in $\alpha$ as $e_1,\ldots,e_{3h},$ respecting the order.  Similarly, let $\beta$ be an ordered set of $E(Y_h^{(h+1)})$ with the order 
\[\{\{v^2_1,v^1_h\},\ldots,\{v^2_h,v^1_1\},\{v^3_1,v^2_h\},\ldots,\{v^3_h,v^2_1\},\{v^1_1,v^3_h\},\ldots,\{v^1_h,v^3_1\}\},\]
and the edges $f_1,\ldots,f_{3h}$, again respecting the order.

This result is based on considering those equations from the system $[A,X]=0$ that are in positions indexed by $\alpha$. For $l=1,h+1,2h+1$, we have $e_l=\{v_0, v^q_h\}$ for $q=1,2,3$, respectively. The corresponding equation has at least two terms, namely
\[a_{v_0v^{q+1}_1}x_{f_l}+a_{v_0v^{q-1}_1}x_{f_{l-1}},\] 
and  by Condition 2, those are the only two terms. All other elements in $\alpha$ are of the form $\{v^q_b, v^{q-1}_{h-b}\}$. The corresponding equation again has exactly two terms: 
\[a_{v^q_bv^q_{b+1}}x_{f_l}-a_{v^{q-1}_{h-b}v^{q-1}_{h-b+1}}x_{f_{l-1}}.\]

In summary, we can record the equations we have identified above as $M\bx=\bzero,$ where $M$ is 
\[
\begin{bmatrix}
a_{v_0v^2_1} & & & & & & & & a_{v_0v^3_1}\\
-a_{v^1_{h-1}v^1_h} & a_{v^2_1v^2_2} & & & & & & & \\
 & \ddots & \ddots &  & & & & & \\
 & & a_{v_0v^1_1} & a_{v_0v^3_1} & & & & & \\ 
 & & & -a_{v^2_{h-1}v^2_h} & a_{v^3_1v^3_2} & & & & \\
 & & & & \ddots & \ddots & & & \\ 
 & & & & & a_{v_0v^2_1} & a_{v_0v^1_1} & & \\ 
 & & & & & & -a_{v^3_{h-1}v^3_{h}} & a_{v^1_{1}v^1_2} & \\ 
 & & & & & & & \ddots & \ddots \\ 
\end{bmatrix},
\]
 and $\bx=(x_{f_1},\ldots,x_{f_{3h}})\trans $. Since $\det(M) \neq 0$ for any choice of $A \in \S(G)$, it follows that $x_{f_1}=\cdots =x_{f_{3h}}=0$.
\end{proof}

\begin{definition}
When the conditions of Lemma~\ref{thm:rule3} are satisfied we update $G_l$ to $G_{l+1}$, we say that $Y^{(h)}$ \emph{forces} $Y^{(h+1)}$ and we denote this by $$Y^{(h)}\forces{G}{G_l} Y^{(h+1)}.$$ 
\end{definition}

Applications of Lemma~\ref{thm:rule3} will be discussed in Section~\ref{sec:trees}.

\begin{theorem}\label{sspseq}
 If $G$ has a sequence: 
   \[G=G_0 \subset G_1 \subset \ldots \subset G_r=K_n,\]
such that for $i=0,\ldots, r-1$ each $G_{l+1}$ is constructed from $G_{l}$ by adding some edges by following one (or more) of the rules covered in Lemmas \ref{thm:rule1}, \ref{thm:rule2}, \ref{thm:rule3}, then $G \in \GG$.
\end{theorem}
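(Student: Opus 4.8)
The plan is to prove directly that every $A\in\S(G)$ has the SSP, by running the forcing process described just before the statement along the given chain. So fix an arbitrary $A\in\S(G)$ and let $X$ be any symmetric matrix with $A\circ X=I\circ X=0$ and $[A,X]=0$; the goal is to show $X=0$. First I would observe that the two Hadamard conditions say precisely that $x_{ij}=0$ whenever $i=j$ or $\{i,j\}\in E(G)$, which is exactly the statement $X\in\Szb(G_0^c)$ (recall $G_0=G$). This supplies the base case.

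The core of the argument is a finite induction along the chain $G=G_0\subset G_1\subset\cdots\subset G_r=K_n$, with induction hypothesis $X\in\Szb(G_l^c)$. The key observation is that the triple of standing hypotheses $A\in\S(G)$, $X\in\Szb(G_l^c)$, and $[A,X]=0$ is exactly the hypothesis shared by Lemmas~\ref{thm:rule1}, \ref{thm:rule2}, and \ref{thm:rule3}. Here $A$ and the equation $[A,X]=0$ are fixed once and for all, and only the membership $X\in\Szb(G_l^c)$ is being upgraded at each step. By assumption $G_{l+1}$ is obtained from $G_l$ by adding edges according to one or more of these three rules; applying the corresponding lemma (and, when several rules are used in passing from $G_l$ to $G_{l+1}$, chaining the resulting implications through the intermediate supergraphs) yields $X\in\Szb(G_{l+1}^c)$, advancing the induction by one step.

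After $r$ steps we reach $X\in\Szb(G_r^c)=\Szb(K_n^c)$. Since $K_n^c$ has no edges, a matrix in $\Szb(K_n^c)$ has zero diagonal and, there being no edges to carry nonzero entries, all off-diagonal entries zero as well; hence $\Szb(K_n^c)=\{0_n\}$ and $X=0$. As $X$ was an arbitrary symmetric matrix satisfying the SSP system for $A$, the matrix $A$ has the SSP, and as $A\in\S(G)$ was arbitrary, we conclude $G\in\GG$.

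I expect no essential obstacle: all the substantive work already resides in Lemmas~\ref{thm:rule1}--\ref{thm:rule3}, and the theorem merely assembles them. The only points requiring a little care are verifying that the standing hypotheses of the three lemmas coincide with the invariant $X\in\Szb(G_l^c)$ maintained by the induction (in particular that each lemma is stated against the fixed original graph $G$ together with the current supergraph $G_l$, rather than against $G_l$ in both roles), and noting that a step using ``one or more'' rules simply decomposes into a short chain of single-rule implications. With the base case $X\in\Szb(G^c)$ immediate and the terminal identity $\Szb(K_n^c)=\{0_n\}$ closing the chain, the proof is complete.
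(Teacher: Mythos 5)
Your proof is correct and follows essentially the same route as the paper: the paper gives no separate proof of Theorem~\ref{sspseq}, since it merely codifies the forcing argument laid out in the prose of Section~\ref{Sec3} (induction along the chain $G=G_0\subset\cdots\subset G_r=K_n$ maintaining $X\in\Szb(G_l^c)$, with each step justified by Lemmas~\ref{thm:rule1}--\ref{thm:rule3}, concluding $X\in\Szb(K_n^c)=\{0_n\}$ and noting the argument is independent of $A\in\S(G)$). Your write-up makes this induction explicit, including the correct observation that the lemmas are stated with respect to the fixed original graph $G$ together with the current supergraph $G_l$, which is exactly the invariant the paper intends.
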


While Lemma \ref{thm:rule1} is relatively straightforward to apply, Lemmas \ref{thm:rule2} and \ref{thm:rule3} are more involved, and are presented here with specific applications in mind. Moreover, it would be possible to develop other rules in this spirit, but they would be even harder to describe from the graph structure. We offer some examples that expose the limitations of the rules we developed above.

\begin{example}
Let $G=Y_2^{(2)}$ as shown below. 
\begin{center}
   \begin{tikzpicture}
\node [label={[label distance=-1.5pt]above:$1$}] (1) at (0:0) {};
\node [label={above:$4$}] (4) at (60:1) {};
\node [label={above:$7$}] (7) at (60:2) {};
\node [label={right:$3$}] (3) at (300:1) {};
\node [label={right:$6$}] (6) at (300:2) {};
\node [label={below:$2$}] (2) at (180:1) {};
\node [label={below:$5$}] (5) at (180:2) {};
\draw (5) -- (2) -- (1);
\draw (7) -- (4) -- (1);
\draw (6) -- (3) -- (1);
\draw (2) -- (4) -- (3) -- (2);
\draw (5) to[bend left] (1);
\draw (7) to[bend left] (1);
\draw (6) to[bend left] (1);
    \end{tikzpicture} 
\end{center}
Let $\alpha=\{\{1,2\},\{ 2,3\},\{1,3\},\{3,4\},\{1,4\},\{2,4\}\}$ be an ordered subset of $E(Y_2^{(2)})$  and 
 $\beta=\{\{2,6\}, \{3,5\},\{3,7\},\{4,6\},\{4,5\},\{2,7\}\}$ be an ordered subset of $E((Y_2^{(2)}) ^c)$.
The set of equations $[A,X]_{i,j}=0$ with $\{i,j\}\in\alpha$ is equivalent to $M\bx=0$ with 
\[M=
\begin{bmatrix}
-a_{1,6} & & & & &  -a_{1,7}\\
a_{3,6} & -a_{2,5} & &  & & \\
 & -a_{1,5} & -a_{1,7} & & & \\ 
 & & a_{4,7} & -a_{3,6} & & \\
 & & & -a_{1,6} & -a_{1,5} &  \\ 
 & & & & -a_{2,5} & a_{4,7}\\ 
\end{bmatrix}
\text{ and }
\bx=\begin{bmatrix}
x_{2,6} \\ x_{3,5} \\ x_{3,7} \\ x_{4,6} \\ x_{4,5} \\ x_{2,7}
\end{bmatrix}.\]
Since $\det(M)=-2 a_{1,6} a_{2,5} a_{1,7} a_{3,6} a_{1,5} a_{4,7}$ is always nonzero,
it follows that $\bx = 0$. (Note that this example is not covered by Lemma~\ref{thm:rule3}.)
\end{example}

The simplest example of a graph in $\GG$ that is not covered by any of the rules above that we could find is given below. 

\begin{example}\label{smallgraph}
 Let $G$ be a $C_4$ with an additional edge, as shown below. 
\begin{center} 
\begin{tikzpicture}
	 \node[label={above:$4$}] (1) at (0:1) {};
	 \node[label={left:$1$}] (2) at (90:1) {};
	 \node[label={left:$2$}] (3) at (180:1) {};
	 \node[label={left:$3$}] (4) at (270:1) {};
	 \node[label={above:$5$}] (5) at (0:2) {};
	 \draw (1) -- (2) -- (3) -- (4) -- (1) -- (5);
\end{tikzpicture}
\end{center}
   Let $A \in \S(G)$ and $X$ be a symmetric $5 \times 5$ matrix satisfying  $A \circ X=I_5 \circ X=0$ 
   and $[A,X]=0$. To prove that $G \in \GG$ we consider two subsets of equations of the linear system $[A,X]=0$, both involving indeterminates $x_{\gamma_j}$ where $$\gamma_j \in \gamma=\{\{1,3\},\{ 1,5\},\{2,4\},\{2,5\},\{3,5\}\} \subseteq E(G ^c).$$ In particular, we define $\bx=(x_{1,3},x_{1,5},x_{2,4},x_{2,5},x_{3,5})\trans $.
 
 The first subset of equations is in $[A,X]$ indexed by positions  $$\alpha=\{\{1,4\},\{ 1,2\},\{2,3\},\{1,5\},\{2,5\}\}  \subseteq E(G),$$ 
and has the coefficient matrix:
\[M=
\begin{bmatrix}
-a_{3,4} & -a_{4,5} & a_{1,2} & 0&  0\\
-a_{2,3} & 0 &a_{1,4} & 0 & 0\\
 a_{1,2}& 0& -a_{3,4} & 0 &0 & \\ 
 0& a_{1,1}-a_{5,5}& 0 & a_{1,2} & 0 \\
 0& a_{1,2}& -a_{4,5}&a_{2,2}-a_{5,5} & a_{2,3}\\ 
\end{bmatrix},
\] while the second subset of equations is in $[A,X]$ indexed by positions 
$$\beta=\{\{1,4\}, \{1,5\},\{2,5\},\{4,5\},\{3,4\}\} \subseteq E(G)$$
and has the coefficient matrix: 
\[N=
\begin{bmatrix}
-a_{3,4} & -a_{4,5} & a_{1,2} & 0&  0\\
 0& a_{1,1}-a_{5,5}& 0 & a_{1,2} & 0 \\
 0 & a_{1,2}& -a_{4,5}&a_{2,2}-a_{5,5} & a_{2,3}\\ 
 0&a_{1,4} & 0  & 0 &a_{3,4}\\
 -a_{1,4}& 0& a_{2,3} & 0 &-a_{4,5}\\ 
\end{bmatrix}.
\]
%

We need to consider the systems of equations $M\bx=0$, and $N\bx=0$ simultaneously. Note that $\det(M)=a_{1,2} a_{2,3} a_{4,5} (a_{2,3} a_{3,4}- a_{1,2}a_{1,4})$ allows us to conclude $\bx=0$ unless $a_{2,3} a_{3,4}- a_{1,2}a_{1,4}=0$. If $a_{2,3} a_{3,4}- a_{1,2}a_{1,4} =0$, then $a_{1,4}=\frac{a_{2,3}  a_{3,4}}{a_{1,2}}$, and $\det N$ simplifies to $\det N=2a_{1,2}a_{1,4}a_{3,4}a_{4,5}^2 \ne 0$, again implying $\bx=0$. This proves that any $A \in \S(G)$ has the SSP and therefore $G \in \GG$.
\end{example}


\section{Trees}
\label{sec:trees}

In this section we characterise the trees in $\GG$.  
As a consequence of Corollary~\ref{rem:tnotinGG}, a tree in $\GG$ is either a path or a tree with a unique vertex of degree three.  First, we show that every path is contained in $\GG$.

\begin{lemma}\label{ex:PnP}
  $P_n \in \GG$ for all $n \geq 1$.
\end{lemma}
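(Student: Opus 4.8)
The plan is to show that $P_n \in \GG$ by applying the forcing machinery of Lemma~\ref{thm:rule1} (via its path-specialized form, Corollary~\ref{cor:rule1cor}) repeatedly to build a sequence $P_n = G_0 \subset G_1 \subset \cdots \subset G_r = K_n$, and then invoke Theorem~\ref{sspseq} to conclude $P_n \in \GG$. Label the vertices $1,2,\ldots,n$ in path order, so that $E(P_n) = \{\{i,i+1\}: 1 \le i \le n-1\}$. The key observation is that in a path, an induced endpoint structure makes the focusing hypothesis automatic: for consecutive vertices near the ends, the commutator equations collapse to a single term.

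First I would treat the base cases $n = 1, 2$ separately, since for these the complement $P_n^c$ has no edges at all, forcing $X = 0$ trivially (indeed $K_1, K_2 \in \GG$). For $n \ge 3$ the plan is to exploit the fact that $P_n$ is itself an induced path, so Corollary~\ref{cor:rule1cor} applies directly. The idea is to fill in the missing edges of $K_n$ in order of increasing distance $d$ between endpoints. At stage $d$, I would assume inductively that $G_l$ already contains $E(P_n^d)$, i.e.\ all pairs at path-distance at most $d$; the goal is to force all pairs at distance $d+1$. Concretely, I would verify that for each $i$ the pair $\{p_i, p_{i+d}\}$ is focused on $V(P) = \{1,\ldots,n\}$ with respect to $G$ and $G_l$. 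Since $P_n$ has maximum degree $2$, the closed neighborhood $N_G[i]$ in the \emph{original} path contains at most $i-1, i, i+1$, so all surviving terms in the commutator equation lie among path vertices, giving focus on $V(P)$ essentially for free. Corollary~\ref{cor:rule1cor} then yields the forcing chain
\[\{p_i, p_{i+d}\} \rightarrow \{p_i, p_{i+d+1}\}\]
applied sequentially for $i = 1, \ldots, n-d-1$, producing $G_{l_1} = G_l + E(P_n^{d+1})$ (modulo the single-endpoint bookkeeping in the statement).

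Iterating this over $d = 1, 2, \ldots, n-2$ enlarges $G_0 = P_n = P_n^1$ successively to $P_n^2, P_n^3, \ldots, P_n^{n-1} = K_n$, which terminates the sequence at the complete graph. I would present the $d=1$ step explicitly to illustrate the mechanism: here $\{i, i+1\} \in E(G)$ and the focusing computation shows $N_G[i] \cap N_{G}[i+1]^c$ reduces to a single vertex, so $\{i,i+1\} \rightarrow \{i,i+2\}$ forces each distance-$2$ pair, exactly mirroring the $L_{3,2}$ example worked out above. Once every edge of $K_n$ has been added through valid forces, Theorem~\ref{sspseq} immediately gives $P_n \in \GG$.

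The main obstacle, and the only point requiring genuine care, is verifying the focusing hypothesis of Corollary~\ref{cor:rule1cor} uniformly at every stage — namely that at distance $d$ the relevant sets $N_G[p_i] \cap N_{G_l}[p_{i+d}]^c \cap V(P)$ and its symmetric counterpart behave as required (one empty, the other a single vertex $p_{i+d+1}$) across all $i$, including the boundary indices near the two endpoints of the path. The endpoints are where degree drops from $2$ to $1$, so I would check that no extraneous surviving term appears there; the low degree of $P_n$ is precisely what keeps the argument clean, since there are never enough edges in $G$ to introduce a competing term outside $V(P)$. Everything else is the routine bookkeeping of the forcing order, which Corollary~\ref{cor:rule1cor} has already packaged.
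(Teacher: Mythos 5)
Your proposal is correct and takes essentially the same route as the paper: both build the chain $P_n = G_0 \subset P_n^2 \subset P_n^3 \subset \cdots \subset P_n^{n-1} = K_n$ by forcing missing edges in order of increasing path-distance and then conclude $P_n \in \GG$ via Theorem~\ref{sspseq}. The only cosmetic difference is that you package the distance-$d$ stage through Corollary~\ref{cor:rule1cor} (whose focusing hypothesis is automatic here, since $V(P)=V(G)$ makes every pair trivially focused on $V(P)$), whereas the paper carries out the same forces $\{i,i+d\}\rightarrow\{i,i+d+1\}$ one at a time directly from Lemma~\ref{thm:rule1}.
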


\begin{proof} 
Let $G=G_0=P_n$.  That is, $V(G)=\{1,2,\ldots,n\}$ and $$E(G)=\{\{i,i+1\}; 1 \leq i \leq n-1\}.$$ 
First we notice that $\{1,2\}\forces{G}{G_0} \{1,3\}$, and we define $G_1=G+\{1,3\}$. Now we have $\{2,3\}\forces{G}{G_1} \{2,4\},$ and we define $G_2=G_1+\{2,4\}$. We proceed in this way, altogether making the following steps:  
\[\{i,i+1\}\forces{G}{G_{i-1}} \{i,i+2\}  \text{ for }i=1,2, \ldots, n-2.\]
The resulting graph $G_{n-2}$ is equal to $G^2$. 

Next we add the edges that connect points at distance three in $G$.  
 We can make the following sequence: 
   \[\{i,i+2\}\forces{G}{G_{n-2+i}} \{i,i+3\}  \text{ for }i=1,2, \ldots, n-2,\]
  resulting in the graph  $G_{2n-5}=G^3$.  We continue by adding edges that connect points at the distance four in $G$, then the ones the distance five in $G$, and so on.  This process completes with
  \[\{1,n-1\}\forces{G}{G_{r-1}} \{1,n\},\]
   and $G_{r}=G^{n-1}=K_n$, where $r= \binom{n-2}{2}$.
  We conclude that $X\in\Szb(K_n^c)$ has to be the zero matrix and therefore  $P_n\in\GG$. 
\end{proof}

The main result in this section states that every tree with a unique vertex of degree three is in $\GG$, and thus characterises the trees in $\GG$.
To illustrate the steps needed, and to help the reader follow the proof in the general case, we fist offer an example.  By $P_{u,v}$ we will denote the unique path from vertex $u$ to vertex $v$ in the given tree.

\begin{example}\label{ex:trees}
Let $G$ be the tree obtained by identifying leaves of $P_3$, $P_4$, and $P_5$ as shown below.  Let $n=|V(G)|$.  By applying the tools from Section \ref{Sec3} we will show that $G$ is an SSP graph. 

 \begin{center}
   \begin{tikzpicture}
\node[label={above:$z$}] (z)   at (0,0) {}; 

\node[label={right:$u_1$}] (u1)   at (300:2) {}; 
\node[label={right:$u_2$}] (u2)   at (300:1) {}; 
\node (u3) at (0,0) {};
\draw (u1) -- (u2) -- (u3);

\node[label={above:$v_1$}] (v1)   at (60:3) {}; 
\node[label={above:$v_2$}] (v2)   at (60:2) {}; 
\node[label={above:$v_3$}] (v3)   at (60:1) {}; 
\node (v4) at (0,0) {};
\draw (v1) -- (v2) -- (v3) -- (v4);

\node[label={below:$w_1$}] (w1)   at (180:4) {}; 
\node[label={below:$w_2$}] (w2)   at (180:3) {}; 
\node[label={below:$w_3$}] (w3)   at (180:2) {}; 
\node[label={below:$w_4$}] (w4)   at (180:1) {}; 
\node (w5) at (0,0) {};
\draw (w1) -- (w2) -- (w3) -- (w4) -- (w5);
\end{tikzpicture} 
  \end{center}  
   Let us label the vertices as indicated in the picture, with the convention that $z=u_3=v_4=w_5$. Let $U=\{u_i\}_{i=1}^3$, $V=\{v_i\}_{i=1}^4$, and $W=\{w_i\}_{i=1}^5$.  We will establish that $G \in \GG$ in four phases. 
    
 Let $A \in \S(G)$ and $X$ be a symmetric $10 \times 10$ matrix satisfying $$A \circ X=I_{10} \circ X=0\text{ and }[A,X]=0.$$
Initially we know that $X_{ij}=0$, if $\{i,j\} \in E(G)$ or $i=j$. Step by step we will establish other zero entries in $X$. To keep track of this process we will define a sequence of graphs $G_i$ according to the set up in Section 3. 
 
{\bf Phase 1.}  We apply Corollary~\ref{cor:corofcor}  to the paths $P_{u_1,z}$, $P_{v_1,z}$ and $P_{w_1,z}$, respectively, and
 conclude that $X\in\Szb(G_1)$, where $G_1$ is obtained from $G$ by adding to it all possible edges with both vertices either in $U$, in $V$, or in $W$. 
   
{\bf Phase 2.}  Let $C$ be the $3$-cycle $G_1^c[N_G(z)]$.  Since $N_{G_1}[z]^c=\emptyset$, we have $z \rightarrow C$ by Lemma~\ref{thm:rule2}.  This leads to  $X\in\Szb(G_2)$ with $G_2 = G_1 + C$. 

{\bf Phase 3.} 
Let $Y_2$ be the induced subgraph of $G$ on vertices whose distance to $z$ is at most $2$.  Then $G_2[V(Y_2)]=Y_2^2$. 
 Next we check that every $\{x,y\}\in E(Y_2^{(2)})$ is focused on $V(Y_2)$.  Suppose $\{u_i,v_j\}\in E(Y_2^{(2)})$.
  If neither $u_i$ nor $v_j$ is $z$, then $N_G[u_i]\subseteq V(Y_2)$, $N_G[v_j]\subseteq V(Y_2)$, and $\{u_i,v_j\}$ is focused on $V(Y_2)$.  If $v_j=z$, then $N_G[u_i]$ contains a vertex $u_{i-1}$ outside $V(Y_2)$, but since $N_{G_2}[z]$ also contains $u_{i-1}$, we have that $\{u_i,v_j\}$ is focused on $V(Y_2)$.  The case when $u_i=z$ and the cases for $\{v_i,w_j\}$ and $\{w_i,u_j\}$ are similar.  Thus, Lemma~\ref{thm:rule3} shows that $X\in\Szb(G_{3})$ with $G_{3}=G_2+Y_2^3$.  

\newcommand{\exscale}{0.8}
 \begin{center}
 \begin{figure}[htb!]
 \centering
 \subfigure	[$G_1$ after Phase 1]{
    \begin{tikzpicture}[scale=\exscale,transform shape]
\node[label={above:$z$}] (z)   at (0,0) {}; 

\node[label={right:$u_1$}] (u1)   at (300:2) {}; 
\node[label={right:$u_2$}] (u2)   at (300:1) {}; 
\node (u3) at (0,0) {};
\draw (u1) -- (u2) -- (u3);

\node[label={above:$v_1$}] (v1)   at (60:3) {}; 
\node[label={above:$v_2$}] (v2)   at (60:2) {}; 
\node[label={above:$v_3$}] (v3)   at (60:1) {}; 
\node (v4) at (0,0) {};
\draw (v1) -- (v2) -- (v3) -- (v4);

\node[label={below:$w_1$}] (w1)   at (180:4) {}; 
\node[label={below:$w_2$}] (w2)   at (180:3) {}; 
\node[label={below:$w_3$}] (w3)   at (180:2) {}; 
\node[label={below:$w_4$}] (w4)   at (180:1) {}; 
\node (w5) at (0,0) {};
\draw (w1) -- (w2) -- (w3) -- (w4) -- (w5);

\begin{scope}[green]
\foreach \letter/\length in {u/3, v/4, w/5}{
    \pgfmathsetmacro{\lengthm}{int(\length-1)}
    \foreach \i in {1,...,\lengthm}{
        \pgfmathsetmacro{\ip}{int(\i+1)}
        \foreach \j in {\ip,...,\length}{
            \draw (\letter\i) to[bend left] (\letter\j);
        }
    }
}
\end{scope}
    \end{tikzpicture}\label{Ph1}}
 \subfigure	[$G_2$ after Phase 2]{
 \begin{tikzpicture}[scale=\exscale,transform shape]
\node[label={above:$z$}] (z)   at (0,0) {}; 

\node[label={right:$u_1$}] (u1)   at (300:2) {}; 
\node[label={right:$u_2$}] (u2)   at (300:1) {}; 
\node (u3) at (0,0) {};
\draw (u1) -- (u2) -- (u3);

\node[label={above:$v_1$}] (v1)   at (60:3) {}; 
\node[label={above:$v_2$}] (v2)   at (60:2) {}; 
\node[label={above:$v_3$}] (v3)   at (60:1) {}; 
\node (v4) at (0,0) {};
\draw (v1) -- (v2) -- (v3) -- (v4);

\node[label={below:$w_1$}] (w1)   at (180:4) {}; 
\node[label={below:$w_2$}] (w2)   at (180:3) {}; 
\node[label={below:$w_3$}] (w3)   at (180:2) {}; 
\node[label={below:$w_4$}] (w4)   at (180:1) {}; 
\node (w5) at (0,0) {};
\draw (w1) -- (w2) -- (w3) -- (w4) -- (w5);
 
\begin{scope}[black!50]
\foreach \letter/\length in {u/3, v/4, w/5}{
    \pgfmathsetmacro{\lengthm}{int(\length-1)}
    \foreach \i in {1,...,\lengthm}{
        \pgfmathsetmacro{\ip}{int(\i+1)}
        \foreach \j in {\ip,...,\length}{
            \draw (\letter\i) to[bend left] (\letter\j);
        }
    }
}
\end{scope}

\begin{scope}[green]
\draw (u2) -- (v3) -- (w4) -- (u2);
\end{scope}
    \end{tikzpicture}   \label{Ph2}} 
   	\subfigure[$G_3$ after Phase 3]{ 
\begin{tikzpicture}[scale=\exscale,transform shape]
\node[label={above:$z$}] (z)   at (0,0) {}; 

\node[label={right:$u_1$}] (u1)   at (300:2) {}; 
\node[label={[label distance=5pt]right:$u_2$}] (u2)   at (300:1) {}; 
\node (u3) at (0,0) {};
\draw (u1) -- (u2) -- (u3);

\node[label={above:$v_1$}] (v1)   at (60:3) {}; 
\node[label={above:$v_2$}] (v2)   at (60:2) {}; 
\node[label={[xshift=-10pt,yshift=3pt]above:$v_3$}] (v3)   at (60:1) {}; 
\node (v4) at (0,0) {};
\draw (v1) -- (v2) -- (v3) -- (v4);

\node[label={below:$w_1$}] (w1)   at (180:4) {}; 
\node[label={below:$w_2$}] (w2)   at (180:3) {}; 
\node[label={below:$w_3$}] (w3)   at (180:2) {}; 
\node[label={[label distance=6pt]below:$w_4$}] (w4)   at (180:1) {}; 
\node (w5) at (0,0) {};
\draw (w1) -- (w2) -- (w3) -- (w4) -- (w5);

\begin{scope}[black!50]
\foreach \letter/\length in {u/3, v/4, w/5}{
    \pgfmathsetmacro{\lengthm}{int(\length-1)}
    \foreach \i in {1,...,\lengthm}{
        \pgfmathsetmacro{\ip}{int(\i+1)}
        \foreach \j in {\ip,...,\length}{
            \draw (\letter\i) to[bend left] (\letter\j);
        }
    }
}
\end{scope}

\begin{scope}[black!50]
\draw (u2) -- (v3) -- (w4) -- (u2);
\end{scope}

\begin{scope}[green]
\draw (u1) -- (v3) -- (w3) -- (u2) -- (v2) -- (w4) -- (u1);
\end{scope}
    \end{tikzpicture} \label{Ph3}} 
    \caption{Sequence of graphs $G_1 \subset G_2 \subset G_3$. The new edges 
    in the supergraphs are colored by green.}\label{P2P3P3}
\end{figure}
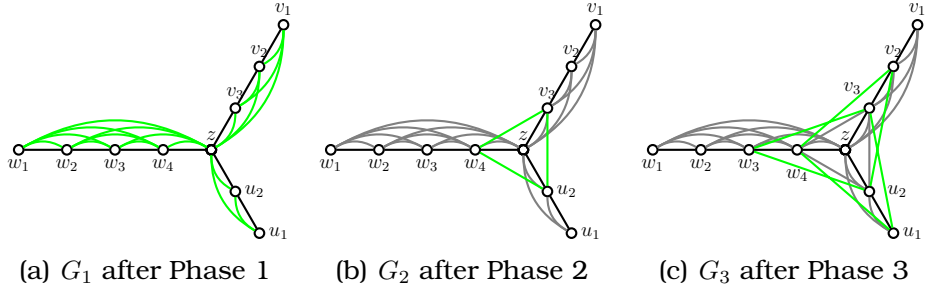
\end{center}

 \begin{center}
 \begin{figure}[htb!]
 \centering
\subfigure[Phase 4a]{
\begin{tikzpicture}[scale=\exscale,transform shape]
\node[label={above:$z$}] (z)   at (0,0) {}; 

\node[label={right:$u_1$}] (u1)   at (300:2) {}; 
\node[label={[label distance=5pt]right:$u_2$}] (u2)   at (300:1) {}; 
\node (u3) at (0,0) {};
\draw (u1) -- (u2) -- (u3);

\node[label={above:$v_1$}] (v1)   at (60:3) {}; 
\node[label={above:$v_2$}] (v2)   at (60:2) {}; 
\node[label={[xshift=-10pt,yshift=3pt]above:$v_3$}] (v3)   at (60:1) {}; 
\node (v4) at (0,0) {};
\draw (v1) -- (v2) -- (v3) -- (v4);

\node[label={below:$w_1$}] (w1)   at (180:4) {}; 
\node[label={below:$w_2$}] (w2)   at (180:3) {}; 
\node[label={[xshift=-2pt]below:$w_3$}] (w3)   at (180:2) {}; 
\node[label={[label distance=6pt]below:$w_4$}] (w4)   at (180:1) {}; 
\node (w5) at (0,0) {};
\draw (w1) -- (w2) -- (w3) -- (w4) -- (w5);

\begin{scope}[black!50]
\foreach \letter/\length in {u/3, v/4, w/5}{
    \pgfmathsetmacro{\lengthm}{int(\length-1)}
    \foreach \i in {1,...,\lengthm}{
        \pgfmathsetmacro{\ip}{int(\i+1)}
        \foreach \j in {\ip,...,\length}{
            \draw (\letter\i) to[bend left] (\letter\j);
        }
    }
}
\end{scope}

\begin{scope}[black!50]
\draw (u2) -- (v3) -- (w4) -- (u2);
\end{scope}

\begin{scope}[black!50]
\draw (u1) -- (v3) -- (w3) -- (u2) -- (v2) -- (w4) -- (u1);
\end{scope}

\begin{scope}[green]
\foreach \u/\v in {u1/v2,u2/v1}{
  \draw (\u) to[bend right] (\v);
}

\foreach \u/\w in {u1/w3,u2/w2}{
  \draw (\u) to[bend left] (\w);
}

\foreach \v/\w in {v1/w4,v2/w3,v3/w2}{
  \draw (\v) to[bend right] (\w);
}

\end{scope}

\begin{scope}
\node[rectangle,draw=none,below,align=center] at (-1.25,-2) {
    $\{w_4,u_1\} \rightarrow \{u_1,w_3\}$\\
    $\{w_3,u_2\} \rightarrow \{u_2,w_2\}$\\
    $\{v_3,u_1\} \rightarrow \{u_1,v_2\}$\\
    $\{v_2,u_2\} \rightarrow \{u_2,v_1\}$\\
    $\{w_5,v_1\} \rightarrow \{v_1,w_4\}$\\
    $\{w_4,v_2\} \rightarrow \{v_2,w_3\}$\\
    $\{w_3,v_3\} \rightarrow \{v_3,w_2\}$\\
};
\node[draw=none,inner sep=0] at (0,-6) {};
\end{scope}
\end{tikzpicture}\label{4a}} 
\subfigure[Phase 4b]{   
\begin{tikzpicture}[scale=\exscale,transform shape]
\node[label={above:$z$}] (z)   at (0,0) {}; 

\node[label={right:$u_1$}] (u1)   at (300:2) {}; 
\node[label={[label distance=5pt]right:$u_2$}] (u2)   at (300:1) {}; 
\node (u3) at (0,0) {};
\draw (u1) -- (u2) -- (u3);

\node[label={above:$v_1$}] (v1)   at (60:3) {}; 
\node[label={above:$v_2$}] (v2)   at (60:2) {}; 
\node[label={[xshift=-10pt,yshift=3pt]above:$v_3$}] (v3)   at (60:1) {}; 
\node (v4) at (0,0) {};
\draw (v1) -- (v2) -- (v3) -- (v4);

\node[label={below:$w_1$}] (w1)   at (180:4) {}; 
\node[label={below:$w_2$}] (w2)   at (180:3) {}; 
\node[label={[xshift=-2pt]below:$w_3$}] (w3)   at (180:2) {}; 
\node[label={[label distance=6pt]below:$w_4$}] (w4)   at (180:1) {}; 
\node (w5) at (0,0) {};
\draw (w1) -- (w2) -- (w3) -- (w4) -- (w5);

\begin{scope}[black!50]
\foreach \letter/\length in {u/3, v/4, w/5}{
    \pgfmathsetmacro{\lengthm}{int(\length-1)}
    \foreach \i in {1,...,\lengthm}{
        \pgfmathsetmacro{\ip}{int(\i+1)}
        \foreach \j in {\ip,...,\length}{
            \draw (\letter\i) to[bend left] (\letter\j);
        }
    }
}
\end{scope}

\begin{scope}[black!50]
\draw (u2) -- (v3) -- (w4) -- (u2);
\end{scope}

\begin{scope}[black!50]
\draw (u1) -- (v3) -- (w3) -- (u2) -- (v2) -- (w4) -- (u1);
\end{scope}

\begin{scope}[black!50]
\foreach \u/\v in {u1/v2,u2/v1}{
  \draw (\u) to[bend right] (\v);
}

\foreach \u/\w in {u1/w3,u2/w2}{
  \draw (\u) to[bend left] (\w);
}

\foreach \v/\w in {v1/w4,v2/w3,v3/w2}{
  \draw (\v) to[bend right] (\w);
}

\end{scope}

\begin{scope}[green]
\foreach \u/\v in {u1/v1}{
  \draw (\u) to[bend right] (\v);
}

\foreach \u/\w in {u1/w2,u2/w1}{
  \draw (\u) to[bend left] (\w);
}

\foreach \v/\w in {v1/w3,v2/w2,v3/w1}{
  \draw (\v) to[bend right] (\w);
}
\end{scope}

\begin{scope}
\node[rectangle,draw=none,below,align=center] at (-1.25,-2) {
    $\{w_3,u_1\} \rightarrow \{u_1,w_2\}$\\
    $\{w_2,u_2\} \rightarrow \{u_2,w_1\}$\\
    $\{v_2,u_1\} \rightarrow \{u_1,v_1\}$\\
    $\{w_4,v_1\} \rightarrow \{v_1,w_3\}$\\
    $\{w_3,v_2\} \rightarrow \{v_2,w_2\}$\\
    $\{w_2,v_3\} \rightarrow \{v_3,w_1\}$\\
};
\end{scope}
\node[draw=none,inner sep=0] at (0,-6) {};
\end{tikzpicture} \label{4b}}
\subfigure[Phase 4c]{
\begin{tikzpicture}[scale=\exscale,transform shape]
\node[label={above:$z$}] (z)   at (0,0) {}; 

\node[label={right:$u_1$}] (u1)   at (300:2) {}; 
\node[label={[label distance=5pt]right:$u_2$}] (u2)   at (300:1) {}; 
\node (u3) at (0,0) {};
\draw (u1) -- (u2) -- (u3);

\node[label={above:$v_1$}] (v1)   at (60:3) {}; 
\node[label={above:$v_2$}] (v2)   at (60:2) {}; 
\node[label={[xshift=-10pt,yshift=3pt]above:$v_3$}] (v3)   at (60:1) {}; 
\node (v4) at (0,0) {};
\draw (v1) -- (v2) -- (v3) -- (v4);

\node[label={below:$w_1$}] (w1)   at (180:4) {}; 
\node[label={below:$w_2$}] (w2)   at (180:3) {}; 
\node[label={[xshift=-2pt]below:$w_3$}] (w3)   at (180:2) {}; 
\node[label={[label distance=6pt]below:$w_4$}] (w4)   at (180:1) {}; 
\node (w5) at (0,0) {};
\draw (w1) -- (w2) -- (w3) -- (w4) -- (w5);

\begin{scope}[black!50]
\foreach \letter/\length in {u/3, v/4, w/5}{
    \pgfmathsetmacro{\lengthm}{int(\length-1)}
    \foreach \i in {1,...,\lengthm}{
        \pgfmathsetmacro{\ip}{int(\i+1)}
        \foreach \j in {\ip,...,\length}{
            \draw (\letter\i) to[bend left] (\letter\j);
        }
    }
}
\end{scope}

\begin{scope}[black!50]
\draw (u2) -- (v3) -- (w4) -- (u2);
\end{scope}

\begin{scope}[black!50]
\draw (u1) -- (v3) -- (w3) -- (u2) -- (v2) -- (w4) -- (u1);
\end{scope}

\begin{scope}[black!50]
\foreach \u/\v in {u1/v2,u2/v1}{
  \draw (\u) to[bend right] (\v);
}

\foreach \u/\w in {u1/w3,u2/w2}{
  \draw (\u) to[bend left] (\w);
}

\foreach \v/\w in {v1/w4,v2/w3,v3/w2}{
  \draw (\v) to[bend right] (\w);
}

\end{scope}

\begin{scope}[black!50]
\foreach \u/\v in {u1/v1}{
  \draw (\u) to[bend right] (\v);
}

\foreach \u/\w in {u1/w2,u2/w1}{
  \draw (\u) to[bend left] (\w);
}

\foreach \v/\w in {v1/w3,v2/w2,v3/w1}{
  \draw (\v) to[bend right] (\w);
}
\end{scope}

\begin{scope}[green]
\foreach \u/\w in {u1/w1}{
  \draw (\u) to[bend left] (\w);
}

\foreach \v/\w in {v1/w2,v2/w1}{
  \draw (\v) to[bend right] (\w);
}
\end{scope}

\begin{scope}
\node[rectangle,draw=none,below,align=center] at (-1.25,-2) {
    $\{w_2,u_1\} \rightarrow \{u_1,w_1\}$\\
    $\{w_3,v_1\} \rightarrow \{v_1,w_2\}$\\
    $\{w_2,v_2\} \rightarrow \{v_2,w_1\}$\\
};
\end{scope}
\node[draw=none,inner sep=0] at (0,-6) {}; 
    \end{tikzpicture} \label{4c}}
\subfigure[Phase 4d]{
\begin{tikzpicture}[scale=\exscale,transform shape]
\node[label={above:$z$}] (z)   at (0,0) {}; 

\node[label={right:$u_1$}] (u1)   at (300:2) {}; 
\node[label={[label distance=5pt]right:$u_2$}] (u2)   at (300:1) {}; 
\node (u3) at (0,0) {};
\draw (u1) -- (u2) -- (u3);

\node[label={above:$v_1$}] (v1)   at (60:3) {}; 
\node[label={above:$v_2$}] (v2)   at (60:2) {}; 
\node[label={[xshift=-10pt,yshift=3pt]above:$v_3$}] (v3)   at (60:1) {}; 
\node (v4) at (0,0) {};
\draw (v1) -- (v2) -- (v3) -- (v4);

\node[label={below:$w_1$}] (w1)   at (180:4) {}; 
\node[label={below:$w_2$}] (w2)   at (180:3) {}; 
\node[label={[xshift=-2pt]below:$w_3$}] (w3)   at (180:2) {}; 
\node[label={[label distance=6pt]below:$w_4$}] (w4)   at (180:1) {}; 
\node (w5) at (0,0) {};
\draw (w1) -- (w2) -- (w3) -- (w4) -- (w5);

\begin{scope}[black!50]
\foreach \letter/\length in {u/3, v/4, w/5}{
    \pgfmathsetmacro{\lengthm}{int(\length-1)}
    \foreach \i in {1,...,\lengthm}{
        \pgfmathsetmacro{\ip}{int(\i+1)}
        \foreach \j in {\ip,...,\length}{
            \draw (\letter\i) to[bend left] (\letter\j);
        }
    }
}
\end{scope}

\begin{scope}[black!50]
\draw (u2) -- (v3) -- (w4) -- (u2);
\end{scope}

\begin{scope}[black!50]
\draw (u1) -- (v3) -- (w3) -- (u2) -- (v2) -- (w4) -- (u1);
\end{scope}

\begin{scope}[black!50]
\foreach \u/\v in {u1/v2,u2/v1}{
  \draw (\u) to[bend right] (\v);
}

\foreach \u/\w in {u1/w3,u2/w2}{
  \draw (\u) to[bend left] (\w);
}

\foreach \v/\w in {v1/w4,v2/w3,v3/w2}{
  \draw (\v) to[bend right] (\w);
}

\end{scope}

\begin{scope}[black!50]
\foreach \u/\v in {u1/v1}{
  \draw (\u) to[bend right] (\v);
}

\foreach \u/\w in {u1/w2,u2/w1}{
  \draw (\u) to[bend left] (\w);
}

\foreach \v/\w in {v1/w3,v2/w2,v3/w1}{
  \draw (\v) to[bend right] (\w);
}
\end{scope}

\begin{scope}[black!50]
\foreach \u/\w in {u1/w1}{
  \draw (\u) to[bend left] (\w);
}

\foreach \v/\w in {v1/w2,v2/w1}{
  \draw (\v) to[bend right] (\w);
}
\end{scope}

\begin{scope}[green]
\foreach \v/\w in {v1/w1}{
  \draw (\v) to[bend right] (\w);
}
\end{scope}

\begin{scope}
\node[rectangle,draw=none,below,align=center] at (-1.25,-2) {
    $\{w_2,v_1\} \rightarrow \{v_1,w_1\}$\\
};
\end{scope}
\node[draw=none,inner sep=0] at (0,-6) {}; 
    \end{tikzpicture}\label{4d}}
    \caption{Detailed sequence of graphs in Phase 4, all induced by Lemma \ref{thm:rule1}. The new edges are colored by green.}\label{ex:phase4}
\end{figure}
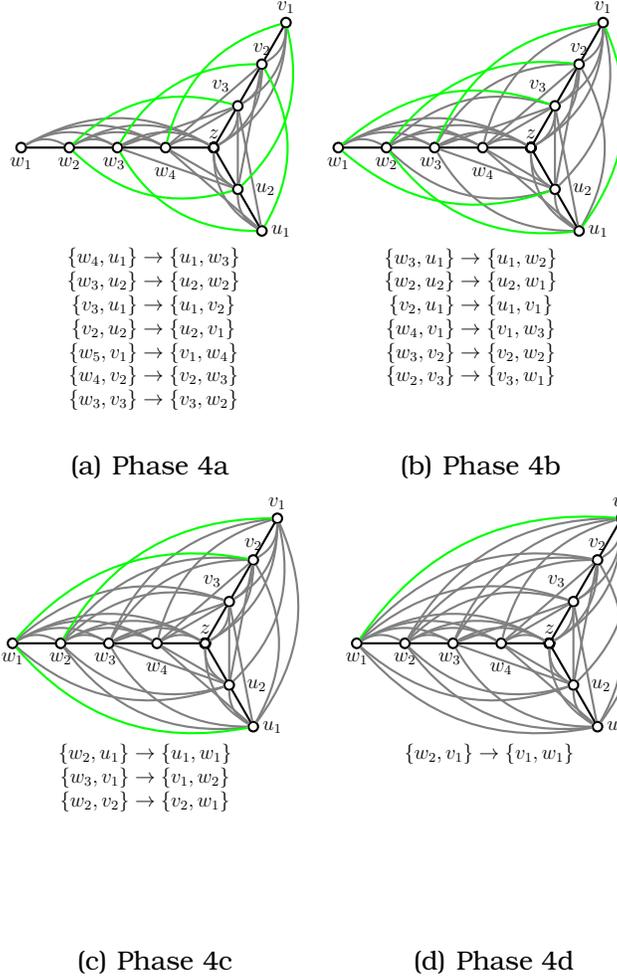
\end{center}

{\bf Phase 4.}  At this point, $E(G_3)$ contains all pairs of vertices at distance $3$ in $G$ and all pairs in $U$, $V$, $W$, respectively.  Next we will add all pairs at distance $k$  for $k=4,5,6,7$ inductively. Starting with $k=4$, we let $P=P_{u_1,v_1}$, and  we apply Corollary~\ref{cor:rule1cor} to $P$ with $d=3$.  This step adds all pairs at distance $4$ in $P_{u_1,v_1}$.  Next, let $P=P_{u_1,w_2}$.  (Note that the pair $\{z,w_2\}$ is not focused on $P$ if we let $P=P_{u_1,w_1}$, but this does not impair the process since $\{z,w_1\}$ is already included in $G_3$.)  Apply Corollary~\ref{cor:rule1cor} to $P$ with $d=3$ so that all pairs at distance $4$ on $P_{u_1,w_1}$ are included.  Finally, apply Corollary~\ref{cor:rule1cor} to $P=P_{v_1,w_1}$ to include all pairs at distance $4$ on $P_{v_1,w_1}$.  At this point, we may construct a graph $G_4$ by adding all pairs at distance $4$ in $G$ to $E(G_3)$ and conclude that $X\in\Szb(G_4)$.  Carrying out this process inductively with $k=5,6,7$, we reach the desired result that $X\in\Szb(K_n)$ and $X=0$.
\end{example}

\begin{theorem}\label{thm:trees} 
Let $G$ be a tree on $n$ vertices.  Then $G\in\GG$ if and only if it does not contain a vertex of degree at least four nor two vertices of degree three.
\end{theorem}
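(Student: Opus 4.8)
The plan is to prove both implications, with the forward direction essentially free and the work concentrated on the converse.

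First I would dispose of necessity. If a tree $G$ has a vertex of degree at least four, or two vertices of degree three, then Corollary~\ref{rem:tnotinGG} already gives $G\notin\GG$; taking the contrapositive yields the ``only if'' direction. For sufficiency, I would observe that a tree with maximum degree at most three and at most one vertex of degree three is either a path or a \emph{spider} $S$ with a unique degree-three vertex $z$ and three legs of lengths $a\le b\le c$ (deleting $z$ leaves three paths, since every other vertex has degree at most two). Paths are settled by Lemma~\ref{ex:PnP}, so it remains to treat spiders. For these I would exhibit a chain $G=G_0\subset G_1\subset\cdots\subset G_r=K_n$ produced by the forcing rules of Lemmas~\ref{thm:rule1}, \ref{thm:rule2}, and~\ref{thm:rule3}, and then invoke Theorem~\ref{sspseq} to conclude $S\in\GG$. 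The chain generalizes the four phases of Example~\ref{ex:trees}: Phase~1 uses Corollary~\ref{cor:corofcor} on each pendant leg to make each leg together with $z$ a clique; Phase~2 applies the odd-cycle rule (Lemma~\ref{thm:rule2}) at $z$ — legitimate because after Phase~1 one has $N_{G_1}[z]=V(S)$, so the three neighbours of $z$ form a triangle component of $G_1^c[N_G(z)]$ with all focusing conditions vacuous — adding the three cross-leg distance-two edges; Phase~3 applies the $Y_2$-rule (Lemma~\ref{thm:rule3}) to force all cross-leg distance-three edges; and Phase~4 inductively forces all cross-leg distance-$k$ pairs for $k=4,\dots,b+c$ by applying Corollary~\ref{cor:rule1cor} to the leaf-to-leaf geodesics through $z$ (within-leg pairs are already present from Phase~1, so these exhaust all non-edges).

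The main obstacle is the region near $z$, namely Phases~3 and~4, where the degree-three branching couples the equations. The conceptual point I would stress is \emph{why} the phases are organized this way. A cross-leg distance-three pair cannot be forced by plain path-forcing: on a leaf-to-leaf geodesic $P$ through $z$, when one runs Corollary~\ref{cor:rule1cor} with $d=2$ the center appears as an endpoint of a required focusing pair $\{z,\cdot\}$, and the neighbour of $z$ on the third leg then survives in $[A,X]_{z,\cdot}=0$ as an extra, unfocused term. This is exactly the obstruction resolved by the $Y_2$-rule, which treats the full cyclic system of the six distance-three pairs at once, its coefficient determinant being nonzero for every $A\in\S(S)$. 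Once all distance-$\le 3$ pairs are present, for $k\ge 4$ the center is no longer an endpoint of any focusing pair along the chosen geodesic, and the third-leg edges needed for focusing already lie in $G_3$; hence Corollary~\ref{cor:rule1cor} applies cleanly and the induction on $k$ closes.

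Finally I would handle the bookkeeping for unequal and short legs, where the routine-but-fiddly details live. When some leg has length one (for example $S=K_{1,3}$), the graph $Y_2$ of Phase~3 does not embed, and the relevant distance-three pairs must be forced either by ad hoc single applications of Lemma~\ref{thm:rule1} or by a smaller coupled system; indeed $K_{1,3}$ itself is settled entirely by the cycle rule of Phase~2. I expect the hardest part of writing the full proof to be verifying, uniformly in $a\le b\le c$, that the focusing hypotheses of the invoked lemma hold exactly at every step — in particular, choosing for each target distance $k$ the correct geodesic(s) through $z$ (as in the remark of Example~\ref{ex:trees}, where $P_{u_1,w_2}$ rather than $P_{u_1,w_1}$ must be used) so that no unfocused term coming from a third leg spoils the forcing.
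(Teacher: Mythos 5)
Your overall skeleton (necessity via Corollary~\ref{rem:tnotinGG}, paths via Lemma~\ref{ex:PnP}, then a four-phase forcing chain for spiders) matches the paper's, but your division of labour between Phase~3 and Phase~4 contains a genuine gap. You apply Lemma~\ref{thm:rule3} only once, with $Y_2$, and then claim that for every $k\ge 4$ Corollary~\ref{cor:rule1cor} applies along leaf-to-leaf geodesics because ``the center is no longer an endpoint of any focusing pair.'' This is false as soon as all three legs have length at least $3$. Take the spider whose legs $u_1u_2u_3$, $v_1v_2v_3$, $w_1w_2w_3$ each have three edges, where $u_i$ denotes the $U$-leg vertex at distance $i$ from $z$, and similarly for $v_i,w_i$. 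After your Phase~3 the current graph $G_3$ contains the within-leg pairs and all pairs at distance at most $3$. Now try $k=4$ on the geodesic $P$ from $u_3$ to $v_3$ with $d=3$: Corollary~\ref{cor:rule1cor} requires the pair $\{u_3,z\}$ (its index is $i=1\le n-d-1=3$) to be focused on $V(P)$, but $w_1\in N_G[z]$, $w_1\notin V(P)$, and $\{u_3,w_1\}$ is a distance-$4$ pair not yet present, so $N_G[z]\cap N_{G_3}[u_3]^c\not\subseteq V(P)$. The same obstruction occurs on each of the three geodesics and it is circular: adding the $U$--$V$ distance-$4$ pairs needs a $U$--$W$ distance-$4$ pair to be present already, adding $U$--$W$ needs $U$--$V$, and adding $V$--$W$ needs $U$--$V$. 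Truncating the paths does not help (the pair $\{u_3,z\}$ is exactly the one whose force produces $\{u_3,v_1\}$, and one can check that every candidate single-vertex force for $\{u_3,v_1\}$, e.g.\ via $[A,X]_{u_2,v_1}=a_{u_2u_3}x_{u_3v_1}-a_{v_1v_2}x_{u_2v_2}=0$, leaves two surviving terms). So no sequence of applications of Lemma~\ref{thm:rule1} breaks the deadlock; the coupled cyclic system is precisely what Lemma~\ref{thm:rule3} with $h=3$ exists for.

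The repair is what the paper actually does: iterate Lemma~\ref{thm:rule3}, applying the $Y_h$-rule for $h=2,3,\ldots$ up to the length of the shortest leg, which installs all pairs at distance at most (shortest leg length)${}+1$; only then switch to Corollary~\ref{cor:rule1cor}, starting at the first $k$ beyond that bound. For such $k$ the shortest leg has no vertex at distance $k-1$ from $z$, so the problematic pair $\{\cdot,z\}$ either does not exist or is the exempt final pair of the path, and the induction on $k$ closes --- provided the geodesics within each $k$ are processed in the right order (the two geodesics through the shortest leg first, so that when the third one is processed the cross terms toward the shortest leg are already in place), which is the ordering/truncation bookkeeping you correctly anticipated. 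In short: your Phases 1, 2 and the necessity direction are fine, but as written Phase~4 fails for every spider whose legs all have length at least three, and the missing idea is the iterated use of Lemma~\ref{thm:rule3}, not more careful single-edge forcing.
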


\begin{proof}
Corollary \ref{rem:tnotinGG} tells us that the conditions for a tree to be in $\GG$ listed in the theorem are necessary. We still need to prove that they are sufficient. 

If $G$ is a tree whose vertices all have degree less than three, then it is a path and by Lemma \ref{ex:PnP} we have $G \in \GG$. Suppose now $G$ has exactly one degree three vertex $z$.  We may assume that $G$ is obtained from three paths by identifying an endpoint of each of the paths.  More specifically, let $P^{(a)}$ be a path on vertices $\{u_1, \ldots, u_a\}$, $P^{(b)}$ the path on vertices $\{v_1, \ldots, v_b\}$, and $P^{(c)}$ the path on vertices $\{w_1, \ldots, w_c\}$.  Then $G$ is obtained from $P^{(a)}\dunion P^{(b)}\dunion P^{(c)}$ by identifying $u_a$, $v_b$, $w_c$.  For convenience, let $z=u_a=v_b=w_c$ in $G$. Without loss of generality, we may assume $a\leq b\leq c$.  Also, let $U=\{u_i\}_{i=1}^a$, $V=\{v_i\}_{i=1}^b$, and $W=\{w_i\}_{i=1}^c$.

 Let $A\in \S(G)$ and $X$ be a symmetric matrix satisfying $A \circ X=I \circ X=0$ and $[A,X]=0$. The rest of the proof is divided in four phases as in Example \ref{ex:trees}. 

{\bf Phase 1.}
Apply Corollary \ref{cor:corofcor} to the paths $P_{u_1,z}$, $P_{v_1,z}$ and $P_{w_1,z}$, respectively.
We conclude that $X\in\Szb(G_1)$, where $G_1$ is obtained from $G$ by adding all possible edges in $U$, $V$ and $W$, respectively. 

{\bf Phase 2.} Let $C$ be the $3$-cycle $G_1^c[N_G(z)]$.  Since $N_{G_1}[z]^c=\emptyset$, we have $z \rightarrow C$ by Lemma~\ref{thm:rule2}.  This leads to that $X\in\Szb(G_2)$ with $G_2 = G_1 + C$. (See also Figure \ref{Ph2}.)


{\bf Phase 3.}  Notice that for $h=2,\ldots,a$, $Y_h$ is an induced subgraph of $G$ on vertices whose distance with $z$ is at most $h$. 
Let us define a sequence of graphs $H_1=G_2$ and $H_h=G_2+Y_2^3+\ldots+Y_{h}^{h+1}$ for $h=2,\ldots,a-1$. For $h=2,\ldots,a-1$ we inductively apply Lemma~\ref{thm:rule3} as follows. 
First we note that $H_{h-1}[V(Y_h)]=Y_h^h$.  Next we check that every $\{x,y\}\in E(Y_h^{(h)})$ is focused on $V(Y_h)$.  Suppose $\{u_i,v_j\}\in E(Y_h^{(h)})$.  If none of $u_i$, $v_j$ is $z$, then $N_G[u_i]\subseteq V(Y_h)$, $N_G[v_j]\subseteq V(Y_h)$, and $\{u_i,v_j\}$ is focused on $V(Y_h)$.  If $v_j=z$, then $N_G[u_i]$ contains a vertex $u_{i-1}$ outside $V(Y_h)$, but since $N_{H_{h-1}}[z]$ also contains $u_{i-1}$, $\{u_i,v_j\}$ is focused on $V(Y_h)$.  The case when $u_i=z$ and the cases for $\{v_i,w_j\}$ and $\{w_i,u_j\}$ are similar.  Thus, Lemma~\ref{thm:rule3} shows that $X\in\Szb(H_{h+1})$.  Doing this process for $h=2,\ldots, a-1$, we conclude that $X\in\Szb(G_{3})$ by Lemma~\ref{thm:rule3}, where $G_{3}=G+Y_2^3+\cdots +Y_{a-1}^{a}$.

At the end of this phase we know that $X_{ij}=0$, if the distance between $i$ and $j$ is at most $a$, or both $i$ and $j$ are contained in one of $U$, $V$, $W$. (See also Figure \ref{Ph3}.)

{\bf Phase 4.}
At this point, $E(G_3)$ contains all pairs at distance $a$  in $G$ and all pairs in $U$, $V$, $W$, respectively.  Next we will add all pairs at distance $k$  for $k=a+1,\ldots,a+c-2$ inductively, starting with the case $k=a+1$.  Let $v$ be the vertex in $V$ that is furthest away from $z$ with $\dist(z,v)<k$. 
More precisely, $v=v_j$ with $j=\max\{1,c-(k-1)\}$.  Let $P=P_{u_1,v}$.  Apply Corollary~\ref{cor:rule1cor} to $P$ with $d=k-1$.  Then all pairs at distance $k$  on $P_{u_1,v}$ can be included.  
Next, let $w$ be the vertex in $V$ that is furthest away from $z$ with $\dist(z,v)<k$.  (That is, $w=w_j$ with $j=\max\{1,b-(k-1)\}$.)  Let $P=P_{u_1,w}$.  
Apply Corollary~\ref{cor:rule1cor} to $P$ with $d=k-1$ so that all pairs at distance $k$  on $P_{u_1,w_1}$ are included.  
Finally, apply Corollary~\ref{cor:rule1cor} to $P=P_{v_1,w_1}$ to include all pairs at distance $k$  on $P_{v_1,w_1}$.  In this way, we may construct a graph $G_{4}$ by adding all 
pairs at distance $k$  in $G$ to $E(G_3)$ and conclude that $X\in\Szb(G_4)$.  Doing this process inductively with $k=a+1,\ldots,a+c-2$, we reach the desired result that $X\in\Szb(K_n)$ and $X=0$.
\end{proof}

\section{Further Examples of Graphs in $\GG$}\label{SSPgraphs}

In this section we present further examples of graphs in $\GG$ that can be found with methods of this paper. The selection of examples is aiming to show different   features, and it is not intended to be exhaustive.


%

While the focus of this manuscript is to identify families of graphs in $\GG$, the techniques developed can be used more generally, i.e.\ to find matrices with the strong spectral property. For example,  once we have identified some matrices that have the SSP it is useful to know, how to construct other matrices with the SSP. A few observations of this type are collected in the following lemma. 

\begin{lemma}\label{blockSSP}
Let $A\in \S_n(\R)$ with the SSP and $B \in \S_m(\R)$. Moreover, let $C \in \R^{n \times m}$ have no zero entries.  Define
\[M=\begin{bmatrix}
   A & C\\
   C\trans  & B
 \end{bmatrix}.\]
Then $M$ has the SSP if one of the following conditions holds:  
\begin{enumerate}
   \item matrix $B$ has the SSP,
    \item $B \in \S(G)$, where $G^c$ is either a tree or an unicyclic graph with an odd cycle,
   \item $\rk C=m$ and $m \leq n$. \end{enumerate}
\end{lemma}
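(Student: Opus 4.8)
The plan is to take an arbitrary symmetric $X\in\S_{n+m}(\R)$ satisfying $M\circ X=I\circ X=0$ and $[M,X]=0$, and to deduce $X=0$. Partition $X$ conformally with $M$ as
\[X=\begin{bmatrix} X_1 & Y \\ Y\trans & X_2 \end{bmatrix},\]
with $X_1\in\S_n(\R)$, $X_2\in\S_m(\R)$, and $Y\in\R^{n\times m}$. The first reduction is immediate and uses only the Hadamard condition: the off-diagonal block of $M\circ X$ is $C\circ Y$, and since $C$ has no zero entries, $C\circ Y=0$ forces $Y=0$. With $Y=0$, reading off the blocks of $M\circ X=I\circ X=0$ gives $A\circ X_1=I\circ X_1=0$ and $B\circ X_2=I\circ X_2=0$, while the four blocks of $[M,X]=0$ give $[A,X_1]=0$, $[B,X_2]=0$, and the coupling $CX_2=X_1C$. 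Thus $X_1$ satisfies exactly the three defining conditions of the SSP for $A$; since $A$ has the SSP, $X_1=0$. Substituting into the coupling relation leaves the single extra constraint $CX_2=0$, and it remains to prove $X_2=0$ from
\[B\circ X_2=I\circ X_2=0,\quad [B,X_2]=0,\quad CX_2=0\]
under each of the three hypotheses.

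The observation that drives cases (2) and (3) is that, because $C$ has no zero entries, $CX_2=0$ produces a \emph{nowhere-zero} null vector of $X_2$: for each $i$, the $i$-th row of $C$ gives $C_{i,\cdot}X_2=0$, and transposing with $X_2=X_2\trans$ yields $X_2c=0$ where $c$ is that row written as a column, a vector with all entries nonzero. From here, condition (1) is immediate, since $X_2$ then meets the three SSP conditions for $B$, forcing $X_2=0$. Condition (3) is equally short: $\rk C=m$ with $m\le n$ means $C$ is injective as a map $\R^m\to\R^n$, so $CX_2=0$ forces every column of $X_2$ to vanish, i.e.\ $X_2=0$.

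The heart of the argument is case (2), which I would handle combinatorially using $c$ together with the structure of $G^c$. Let $F$ be the support graph of $X_2$, so $\{i,j\}\in E(F)$ iff $(X_2)_{ij}\neq 0$; since $B\circ X_2=0$ we have $F\subseteq G^c$. If a vertex $v$ has degree one in $F$ with unique neighbour $u$, then $(X_2c)_v=(X_2)_{vu}c_u\neq 0$, contradicting $X_2c=0$; hence $F$ has no vertices of degree one, i.e.\ $F$ equals its own $2$-core. When $G^c$ is a tree, $F$ is a forest, and a forest with at least one edge has a leaf, so $F$ is empty and $X_2=0$. When $G^c$ is unicyclic with odd cycle $C$, the $2$-core of a subgraph of a unicyclic graph is empty or exactly $C$, so either $X_2=0$ or $X_2$ is supported precisely on $C$. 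In the latter case, writing out $X_2c=0$ vertex by vertex around $C$ produces a cyclic bidiagonal system in the cycle weights whose coefficient matrix is of the same type as the one appearing in the proof of Lemma~\ref{thm:rule2}; its determinant equals $\pm 2\prod_{j\in V(C)}c_j\neq 0$ precisely because the cycle length is odd and $c$ is nowhere zero, so all cycle weights vanish, contradicting that $C$ is the support. Hence $X_2=0$ in all three cases and $M$ has the SSP.

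The main obstacle is case (2): one must recognise that the hypothesis ``$C$ has no zero entries'' is exactly what supplies a nowhere-zero kernel vector of $X_2$, and then that it is the parity of the cycle that makes the resulting cyclic system nonsingular, mirroring why odd cycles but not even ones succeed in Lemma~\ref{thm:rule2} (an even cycle would allow a nonzero alternating solution, so the odd hypothesis is genuinely needed). The remaining cases are formal once $X_1=0$ and $CX_2=0$ have been established.
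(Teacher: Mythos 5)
Your proof is correct and follows essentially the same route as the paper's: kill the off-diagonal block of $X$ using that $C$ has no zero entries, use the SSP of $A$ to get $X_1=0$ and hence $CX_2=0$, and then settle the three cases, with case (2) resting on the facts that $CX_2=0$ forces the entries of $X_2$ at leaves of $G^c$ to vanish and that the remaining odd-cycle system has nonzero determinant $\pm 2\prod_j c_j$. Your phrasing via a single nowhere-zero kernel vector and the $2$-core of the support graph is just a compact repackaging of the paper's iterative leaf-stripping followed by its appeal to Lemma~\ref{thm:rule2}.
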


\begin{proof}
Let $$M=\begin{bmatrix}
   A & C\\
   C\trans  & B
 \end{bmatrix} \in \S_{n+m}(\R)$$ have the properties listed in the statement of the lemma. Let
$$X=Z \oplus W \in \S_{n+m}(\R),$$ where
 $A \circ Z= I_n \circ Z =0_n$,  $B \circ W= I_m \circ W =0_m$, and $[M,X]=0$. 
 Since $A$ has the SSP, we immediately get $Z=0$. Other conditions coming from  $[M,X]=0$  are
 $$   CW=0 \, \text{ and } \,  [B,W]=0.$$ 
 \begin{enumerate}
\item  If $B$ has the SSP, then $[B,W]=0$ implies $W=0$.
\item Let $W\in\Szb(G^c)$ satisfy $CW=0$. Since $C$ has no zero elements, the condition $CW=0$ implies that all the elements in $W$ corresponding to leaves in $G^c$ have to be zero. Let $G_1$ be the complement of the graph obtained from $G^c$ by deleting all the edges corresponding to leaves. We can now assume that $W\in\Szb(G_1^c)$. Since $G_1^c$ is again either a tree or an unicyclic graph with an odd cycle, we can repeat the argument we just made, to obtain $G_2$ by deleting edges corresponding to leaves in $G_1^c$, proving that $W \in \Szb(G_2^c)$. Repeated application of this argument proves that $W \in\Szb(G_l^c)$, where $G_l^c$ is either an empty graph, or an odd cycle with possibly some isolated vertices. If $G_l^c$ is an empty graph, then we have already shown that $W=0$. Otherwise, we apply Lemma \ref{thm:rule2} with $v\forces{G}{G_l} C$ to the odd cycle in $G_l^c$ and any vertex $v$ corresponding the first $n$ rows in the matrix, again showing that $G_{l+1}^c$ is an empty graph. 
\item  If $\rk C=m$ and $m \leq n$, then $CW=0$
 implies  $W=0$.
\end{enumerate}
\end{proof}


\begin{corollary}\label{vee forestC}
 Let $G\in \GG$ be a nonempty graph, then  $G\vee H \in \GG$ if $H$ satisfies one of the following conditions: 
 \begin{itemize} 
 \item $H \in \GG$ 
 \item $H^c$ is a union of trees and unicyclic graphs with an odd cycle. 
 \end{itemize}
\end{corollary}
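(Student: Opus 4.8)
The plan is to recognise $G\vee H$ as exactly the block situation of Lemma~\ref{blockSSP} and then match the two bullets to its conditions~(1) and~(2). First I would fix an arbitrary $M\in\S(G\vee H)$ and partition it along $V(G)\dunion V(H)$ as
\[M=\begin{bmatrix} A & C\\ C\trans & B\end{bmatrix},\]
so that $A\in\S(G)$, $B\in\S(H)$, and $C$ is the block of $G$--$H$ adjacencies. Since the join makes every vertex of $G$ adjacent to every vertex of $H$, the block $C$ has no zero entry, which is the hypothesis on $C$ in Lemma~\ref{blockSSP}; and since $G\in\GG$, the matrix $A$ has the SSP. Both facts hold for every $M\in\S(G\vee H)$, so proving that each such $M$ has the SSP will give $G\vee H\in\GG$, and it remains only to check the condition on $B$.

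If $H\in\GG$ then every $B\in\S(H)$ has the SSP, so condition~(1) of Lemma~\ref{blockSSP} applies directly and $M$ has the SSP. If instead $H^c$ is a disjoint union of trees and unicyclic graphs with odd cycles, I would invoke condition~(2) of Lemma~\ref{blockSSP}. Here $M\circ X=0$ forces $X=Z\oplus W$, the SSP of $A$ gives $Z=0$, and the surviving constraints are $CW=0$ and $[B,W]=0$. The leaf-peeling step of Lemma~\ref{blockSSP}(2) uses only $CW=0$ together with the fact that the columns of $C$ have no zero entry, so it kills the pendant entries of $W$ in every component simultaneously, reducing $H^c$ to a disjoint union of odd cycles and isolated vertices.

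The one place the argument goes beyond a verbatim citation is the obstacle that Lemma~\ref{blockSSP}(2) is phrased for a \emph{connected} complement, whereas $H^c$ here is a union, and moreover the leftover odd cycles must be removed even though $G$ need not have a universal vertex. I would handle each surviving odd cycle $C$ just as in the lemma: pick any $v\in V(G)$ (available as $G$ is nonempty) and read off the equations $[M,X]_{v,j}=0$ for $j\in V(C)$. The literal ``focused'' hypothesis of Lemma~\ref{thm:rule2} can fail, since $v$ may have non-neighbours in $G$; but every term those non-neighbours contribute has the form $M_{jk}X_{vk}$ with $X_{vk}$ an entry of $Z=0$, so each equation collapses to the two-term relation on consecutive cycle entries, and the odd length makes the resulting system nonsingular exactly as in the determinant computation of Lemma~\ref{thm:rule2}. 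These eliminations are independent across the disjoint cycles, so carrying them out in turn yields $W=0$, hence $X=0$ and the SSP of $M$. I expect the delicate point to be precisely this reduction of the cycle equations using $Z=0$ in the disconnected setting, rather than the routine componentwise peeling.
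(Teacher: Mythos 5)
Your proposal is correct and takes essentially the same route as the paper: the corollary is stated there as an immediate consequence of Lemma~\ref{blockSSP}, using exactly the block decomposition of $M\in\S(G\vee H)$ that you set up, with the two bullets matched to conditions~(1) and~(2) of that lemma. Your additional care in extending Lemma~\ref{blockSSP}(2) from a single tree/unicyclic complement to a disjoint union --- peeling leaves componentwise and then eliminating each leftover odd cycle via the equations $[M,X]_{v,j}=0$, which collapse to the two-term cycle relations because $Z=0$ (equivalently, the focused hypothesis of Lemma~\ref{thm:rule2} does hold once the bookkeeping graph $G_l$ is updated to include all pairs inside $V(G)$) --- correctly fills in the one detail the paper leaves to the reader.
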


Corollary \ref{vee forestC} can be used to identify several dense graphs in $\GG$. More generally, the condition $A \circ X=0$ forces $X$ to be sparse if $A$ is dense. In this case, one can expect the linear system $[A,X]=0$ to be easier to study, and matrices $A$ with the strong spectral property easier to identify.
The two examples that follow show that we cannot extend Corollary~\ref{vee forestC} to include unicyclic graphs with even cycles for $H$, or an empty graph for $G$. 

\begin{example}\label{ex:C4c}
 Choose $n \geq 4$ and let $$A=\left(
\begin{array}{cccc}
 1 & 0 & 1 & 0 \\
 0 & 1 & 0 & 1 \\
 1 & 0 & 1 & 0 \\
 0 & 1 & 0 & 1 \\
\end{array}
\right)\in \S(C_4^c),\, \,  X=\left(
\begin{array}{cccc}
 0 & 1 & 0 & 1 \\
 1 & 0 & 1 & 0 \\
 0 & 1 & 0 & 1 \\
 1 & 0 & 1 & 0 \\
\end{array}
\right) \in \S_4(\R),$$ and  $F=\npmatrix{
  J_{2,{n-4}}\\
  -J_{2,n-4} 
   } \in \R^{4 \times (n-4)}$, where $J_{k,l} \in \R^{k\times l}$ denotes the matrix full of ones.
    Now, let
 $$M=\npmatrix{
   A & F\\
  F & J_{n-4,n-4}
   } \in \S(K_n-C_4) \, \text{ and }\, Y=\npmatrix{
  X & 0\\
  0 & 0
   }\in \S_n(\R).
 $$
It straightforward to check that $Y \circ I_n=0$, $Y \circ M=0$ and $[Y,M]=0$,
implying that $K_n-C_4 \notin \GG$.
\end{example}

%
%
%
%

%

\begin{example}\label{K-t}
%
%

To show that $K_{2n}^{-n}\notin \GG$ we take: 
 $$A=\npmatrix{
 A_{11} & A_{12} & \ldots &A_{1n}\\
 A_{12}\trans  & A_{22} & \ldots &A_{2n}\\
 \vdots & \vdots & \ddots &\vdots\\
 A_{1n}\trans  & A_{2n}\trans  & \ldots &A_{nn}
 }\, \text{ and } \,  X=\npmatrix{
 Y & 0 & \ldots &0\\
 0 &Y & \ldots &0\\
 \vdots & \vdots & \ddots &\vdots\\
 0 & 0 & \ldots &Y
 }$$
  with
 $$A_{ij}=\begin{cases}
  \npmatrix{a_{ii} & 0\\
  0&a_{ii}} & i=j,\\
  \npmatrix{a_{ij} & b_{ij}\\
  b_{ij}&a_{ij}}, & i\ne j\\
 \end{cases} \, \text{ and }\, Y= \npmatrix{0&y \\
  y&0}.$$
Here $a_{ii}$, $a_{ij}$, $b_{ij}$, and $y$ are arbitrary nonzero real numbers.
  Since $X \circ I_{2n}=X \circ A=0$ and $[A,X]=0$, we have showed that $K_{2n}^{-n}\notin \GG$.
   \end{example}

Next, we prove that whether or not $P_n^c$ belongs to $\GG$ depends on divisibility of $n$ by $3$.  

\begin{theorem}\label{thm:PnC}
  $P_n^c \in \GG$ if and only if $n$ is not divisible by 3.
\end{theorem}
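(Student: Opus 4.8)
The plan is to analyze the linear system $[A,X]=0$ directly. Since $(P_n^c)^c=P_n$, every admissible $X$ lies in $\Szb(P_n)$; that is, $X$ is symmetric and tridiagonal with zero diagonal, and its only free entries are $t_i:=x_{i,i+1}$ for $i=1,\dots,n-1$. Because $a_{ij}=0$ exactly when $|i-j|\le 1$, the entry $[A,X]_{i,j}$ involves only $t_{i-1},t_i,t_{j-1},t_j$, with coefficients $a_{i,j\pm1}$ and $a_{i\pm1,j}$. Three families of these equations carry all the information I need: the \emph{corner} equations $[A,X]_{1,3}=a_{1,4}t_3=0$ and $[A,X]_{n-2,n}=-a_{n-3,n}t_{n-3}=0$, each a single surviving term; the \emph{distance-two} equations $[A,X]_{i,i+2}=a_{i,i+3}t_{i+2}-a_{i-1,i+2}t_{i-1}=0$, which relate $t_{i-1}$ and $t_{i+2}$ and so link indices within a single residue class modulo $3$; and the \emph{distance-one} equations $[A,X]_{i,i+1}=(a_{ii}-a_{i+1,i+1})t_i+a_{i,i+2}t_{i+1}-a_{i-1,i+1}t_{i-1}=0$, a three-term recurrence whose three indices $i-1,i,i+1$ fall in the three distinct residue classes.

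For the direction $3\nmid n \Rightarrow P_n^c\in\GG$ (with $n\ge4$; the cases $n\le3$ are checked directly), I would run the forcing process of Section~\ref{Sec3}. The two corner equations force $t_3=0$ and $t_{n-3}=0$ by Lemma~\ref{thm:rule1}, and since the coefficients $a_{i,i+3},a_{i-1,i+2}$ are all nonzero, the distance-two equations propagate these zeros along each residue class: from $t_3=0$ every $t_{3k}=0$, and from $t_{n-3}=0$ every $t_j$ with $j\equiv n\pmod 3$ vanishes. When $3\nmid n$ the classes $0$ and $n\bmod 3$ are distinct, so two of the three residue classes are now entirely zero. Finally, for each surviving variable $t_j$ I would use the distance-one equation at $i=j-1$ (or $i=j+1$ at a boundary): its other two terms lie in the vanished classes, leaving $a_{j-1,j+1}t_j=0$, whence $t_j=0$. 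Thus $X=0$ for every $A\in\S(P_n^c)$.

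For the converse $3\mid n \Rightarrow P_n^c\notin\GG$ I would exhibit a nonzero $X$ together with a matching $A$. Write $n=3r$ and take $X=\bigoplus_{k=1}^r B_k$ block-diagonal, where $B_k$ is the $3\times 3$ tridiagonal zero-diagonal block on the vertices $\{3k-2,3k-1,3k\}$ with off-diagonal weights $(t_{3k-2},t_{3k-1})$; this forces $t_{3k}=0$, which is consistent with the corner constraints precisely because $3\mid n$ places $t_3$ and $t_{n-3}$ on the block separators. Choosing the weights on a common circle, $t_{3k-2}^2+t_{3k-1}^2=\lambda^2$ with the $r$ pairs pairwise distinct, makes all $B_k$ cospectral with spectrum $\{0,\pm\lambda\}$, so the commutant of $X$ contains, besides the diagonal polynomial blocks $c_0^{(s)}I+c_2^{(s)}B_s^2$, nonzero intertwining off-diagonal blocks. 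I would then choose these so that $A$ realizes the exact pattern of $P_n^c$: the within-block pattern is met because dropping the $B_s$-term ($c_1^{(s)}=0$) kills the distance-one entries while $c_2^{(s)}(B_s^2)_{13}=c_2^{(s)}t_{3s-2}t_{3s-1}\ne 0$ supplies the distance-two entry, and each off-diagonal intertwiner is selected to vanish only at the unique distance-one slot $(3s,3s+1)$ between adjacent blocks and to be nonzero elsewhere.

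The main obstacle is this last step of the converse: showing the off-diagonal intertwiners can be chosen \emph{simultaneously} so that every distance-$\ge2$ entry of $A$ is nonzero while the sole distance-one entry between consecutive blocks is zero. This is a genericity statement about the three-dimensional intertwiner spaces $\{T : TB_t=B_sT\}$, and the one subtlety is that the zero-eigenvector $p_0^{(k)}=(t_{3k-1},0,-t_{3k-2})$ has a vanishing middle coordinate, so one must verify this does not force any required entry to zero. I expect an explicit choice (as in the verified case $n=6$, where $t=(2,1,0,1,2)$ already works) to settle this, while the residue analysis of the forward direction guarantees that no such nonzero configuration can survive once $3\nmid n$.
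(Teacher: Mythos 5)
Your forward direction is correct and is in substance the paper's own proof: your corner and distance-two equations are exactly the single-surviving-term forces of Lemma~\ref{thm:rule1} (the paper's forces $\{3l-2,3l\}\rightarrow\{3l,3l+1\}$ propagate $t_3=0$ along the residue class of $0$, and its second batch propagates $t_{n-3}=0$ along the class of $n$), and your distance-one equations are the forces the paper uses to finish; organizing this by residues modulo $3$ is a tidy repackaging, not a different method. (One caveat: $P_2^c=2K_1\notin\GG$ by Example~\ref{cycle}, so the claim that the cases $n\le 3$ ``check directly'' actually fails at $n=2$; the statement must be read with $n\neq 2$.)

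The converse contains a genuine gap, and it is exactly the step you defer. Your construction coincides with the paper's (block-diagonal $X$ with cospectral tridiagonal $3\times 3$ blocks, $M$ assembled from the three-dimensional intertwiner spaces), but the pattern-realization step is not mere genericity, and your hypothesis on the weights is too weak for it to go through. Write the intertwiner between consecutive blocks as $T=a\,\bx_{k}\bx_{k+1}\trans+b\,\by_{k}\by_{k+1}\trans+c\,\bz_{k}\bz_{k+1}\trans$ and set $u=\sin\theta_k\cos\theta_{k+1}$, $v=\cos\theta_k\sin\theta_{k+1}$, $s=\tfrac12(b+c)$; then $T_{31}=-au+sv$ (the $(3k,3k+1)$ entry of $M$, which must vanish) and $T_{13}=-av+su$ (the $(3k-2,3k+3)$ entry, which must be nonzero). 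If $u^2=v^2$, then $T_{31}=0$ forces $T_{13}=0$, so no admissible intertwiner exists at all. Your condition --- weights on a common circle, pairwise distinct as pairs --- does not exclude this: consecutive blocks with weights $(1,2)$ and $(-1,2)$ are distinct points on the circle of radius $\sqrt{5}$ yet give $u=-v$. On a common circle, $u^2\neq v^2$ is equivalent to $t_{3k-2}^2\neq t_{3k+1}^2$, i.e.\ to the hypothesis $|\sin\theta_k|\neq|\sin\theta_{k+1}|$ that the paper imposes for consecutive $k$; this is the missing ingredient. With it, the zero constraints ($b_{kk}=c_{kk}$ on diagonal blocks, and the single linear equation \eqref{ak,k+1} on consecutive blocks) cut out subspaces on which every required-nonzero entry of $M$ is a not-identically-vanishing linear functional, and since distinct blocks of $M$ involve disjoint sets of parameters, the choices are independent --- so the ``simultaneity'' you worry about is a non-issue. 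Finally, the subtlety you do flag (the zero middle coordinate of the null eigenvector) is harmless: every middle-row or middle-column entry of an intertwiner is a nonzero multiple of $b-c$ (and the center entry is $s$), and these quantities are left free by the zero constraints.
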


\begin{proof}
%
First let us assume that $n$ is not divisible by $3$. For $G=P_n^c$ we denote $V(G)=\{1,2,\ldots,n\}$ 
 and $E(G)=\{\{i,j\}; 1 \leq i \leq j-2 \leq n-2\}$.

 For $G=G_0$ and $l =1,2,\ldots, \left\lfloor \frac{n}{3} \right\rfloor$ define  
 $G_{l}=G_{l-1}+ \{3l,3l+1\}$ 
since by Lemma \ref{thm:rule1} we have
 $$\{3l-2,3l\} \forces{G}{G_{l-1}} \{3 l,3l+1\}\text{ for }l =1,2,\ldots, \left\lfloor \frac{n}{3} \right\rfloor.$$
 
 Let $m= \left\lfloor\frac{n}{3} \right\rfloor$.  
In the case, when $n=3m+2$, we construct 
$G_{m+k}=G_{m+k-1}\cup \{(n-3k-1,n-3k)\}$ for $k=1,2,\ldots,m$ using the fact that
$$\{n-3k+1,n-3k+3\}\forces{G}{G_{m+k-1}}  \{n-3k,n-3k+1\}
.
$$  
 We now observe that the pair $\{3t,3t+1\}$ is focused on $\{3t+2\}$ with respect to $G$ and $G_{2m}$, hence
   $$\{3t, 3t+1\}\forces{G}{G_{2m}}  \{3t+1, 3t+2\}$$
and so $G_{3m+1}=K_n$.
Similarly, in the case $n=3m+1$, for $s=0,\ldots, m-1$ the pair $\{n, 3s+1\}$ is focused on $2s+2$ with respect to $G$ and $G_{m}$,
 giving us $$\{n, 3s+1\} \forces{G}{G_{m}} \{3s+1, 3s+2\}.$$  
Let us denote the resulting graph in the sequence by $G_{l_1}$. Now, for $s=1,2,\ldots, m$, the pair $\{n,3s+2\}$ is focused on $\{3s+3\}$ with respect to $G$ and $G_{2m}$, hence
$$\{n, 3s+2\} \forces{G}{G_{2m}} \{3s+2, 3s+3\}$$
and $G_{l_2}=K_n$.
By Theorem \ref{sspseq} it follows that $G \in \GG$.


For $n=3m$, we will construct matrices $M\in \S(P_{3m}^c)$ and   $X \in\Szb(P_{3m})$ such that  $M\circ X=I_{3m} \circ X=0$ and $[M,X]=0$. This will show that $M\in  \S(P_{3m}^c)$ does not have the SSP and thus
 $P_{3m}^c \notin \GG$.

First, we pick  $\theta_1,\ldots,\theta_m$  such that $|\sin\theta_k|\neq |\sin\theta_{k+1}|$ for any $k=1,\ldots, m-1$ and $\sin\theta_k,\cos\theta_k\neq 0$ for any $k=1,\ldots, m$, and construct the matrix 
$X=X_{\theta_1}\oplus X_{\theta_2} \oplus \ldots \oplus X_{\theta_m}\in\Szb(P_{3m})$, where
\[X_\theta=\begin{bmatrix}
0 & \sin\theta & 0 \\
\sin\theta & 0 & \cos\theta \\
0 & \cos\theta & 0
\end{bmatrix}.\]

We define $M \in \S_{3m}(\R)$ to be a block matrix $M= \begin{bmatrix}M_{i,j}\end{bmatrix}_{i,j =1,\ldots,m}$ with blocks
 \[M_{i,j}=a_{i,j}\bx_{\theta_i}\bx_{\theta_j}\trans +b_{i,j}\by_{\theta_i}\by_{\theta_j}\trans +c_{i,j}\bz_{\theta_i}\bz_{\theta_j}\trans \in \S_3(\R),\] 
where
  \[\bx_\theta=\begin{bmatrix}
\cos\theta \\ 0 \\ -\sin\theta 
\end{bmatrix},
\by_\theta=\frac{1}{\sqrt{2}}\begin{bmatrix}
\sin\theta \\ 1 \\\cos\theta 
\end{bmatrix}, \text{ and }
\bz_\theta=\frac{1}{\sqrt{2}}\begin{bmatrix}
\sin\theta \\ -1 \\\cos\theta 
\end{bmatrix},\] 
and $a_{ij},$ $b_{ij}$ and $c_{ij}$ are the elements of some appropriately chosen $m \times m$ symmetric matrices $A$, $B$ and $C$. 
Note that any matrix $M$ defined in this way is symmetric, and since $X_\theta\bx_\theta=0$, $X_\theta\by_\theta=\by_\theta$ and $X_\theta\bz_\theta=-\bz_\theta$, it is easy to prove that $X_{\theta_i} M_{i,j}=M_{i,j}X_{\theta_j}$ for
all $i,j=1,2,\ldots,m$, which implies $[M,X]=0$.

Next let us look at conditions on matrices $A$, $B$ and $C$ that are required for the matrix $M$ to have zero elements required by the pattern of $G$. The diagonal blocks $M_{k,k}$ need to have the elements in the first row and second column, as well as the elements in the second row and third column equal to zero. This is achieved by choosing  $b_{k,k}=c_{k,k}$.  Furthermore, the entry in the third row and the first column of $M_{k,k+1}$ needs to be equal to $0$. This means that we need to choose $A$, $B$ and $C$ is such a way that the equality  
\begin{equation}\label{ak,k+1}
 -\cos\theta_k\sin\theta_{k+1}a_{k,k+1}+\frac{1}{2}\sin\theta_k\cos\theta_{k+1}(b_{k,k+1}+c_{k,k+1})=0
\end{equation}
is satisfied for $k=1,\ldots,m-1$.


%
%
To assure that we have all the required nonzero elements in $M$, we need to make sure that elements of $A$, $B$ and $C$ do not satisfy some finite set of linear equations, so generically chosen matrices $A$, $B$ and $C$ satisfying conditions indicated above, will result in a matrix $M \in  \S(P_{3m}^c) $.
\end{proof}

%

%

Next we prove that the graphs $G$ satisfying $q(G) \geq |G|-1$ have the SSP. 
Those graphs are characterised in \cite[Theorem 4.14]{MR3665573}, where it is proved that all such connected graphs fall into one of the following categories: 
\begin{itemize}
\item  paths,
\item  paths with one leaf attached to an interior vertex, or
\item paths with an extra edge joining two vertices at a distance $2$.
\end{itemize}
The graphs of the first two families are trees with at most one degree three vertex and thus by Theorem \ref{thm:trees} have the SSP.  To complete the question for this family, we prove that the graphs of the latter family have the SSP as well.

\begin{theorem}\label{thm:q=n-2}
Any connected graph $G$ satisfying $q(G) \geq |G|-1$ belongs to $\GG$.
\end{theorem}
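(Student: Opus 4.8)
The plan is to use the characterization recalled just above the statement: every connected $G$ with $q(G)\ge |G|-1$ is a path, a path with one leaf attached to an interior vertex, or a path with an extra edge joining two vertices at distance $2$. The first two families are trees with at most one vertex of degree three, so they lie in $\GG$ by Lemma~\ref{ex:PnP} and Theorem~\ref{thm:trees}. Hence it suffices to treat the third family. I would write $G=P_n+\{t,t+2\}$, where $P_n$ is the path $1-2-\cdots-n$; adding the chord creates the triangle $T=\{t,t+1,t+2\}$ and leaves two pendant paths, the left tail $L=\{1,\dots,t\}$ and the right tail $R=\{t+2,\dots,n\}$, joined through the degree-two apex $t+1$. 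Fixing $A\in\S(G)$ and a symmetric $X$ with $A\circ X=I\circ X=0$ and $[A,X]=0$, the goal is to produce a forcing sequence $G=G_0\subset\cdots\subset K_n$ and invoke Theorem~\ref{sspseq}.

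First I would run a clique-ification phase: applying Corollary~\ref{cor:corofcor} to the left pendant path based at $t$ and to the right pendant path based at $t+2$ turns $L$ and $R$ into cliques, so that in the resulting $G_1$ the only edge between $L$ and $R$ is the chord $\{t,t+2\}$, while the apex $t+1$ is still joined only to $t$ and $t+2$. At this point the remaining non-edges are exactly the $L$--$R$ pairs other than $\{t,t+2\}$ and the apex--tail pairs. I would then close up the graph by increasing $G$-distance, exactly as in Lemma~\ref{ex:PnP} and in Phase~4 of Theorem~\ref{thm:trees}: for a pair whose geodesic avoids the chord, equation~\eqref{ceneqn} retains a single surviving term, so Lemma~\ref{thm:rule1} applies verbatim and these pairs are filled in first.

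The difficulty, and the place where this graph departs from the tree case, is the family of pairs whose shortest path uses the chord (the genuine $L$--$R$ pairs) together with the apex--tail pairs. For such a pair the closed neighbourhood of a triangle vertex always contributes the \emph{other} triangle vertex as an extra index, so the commutator equation keeps two terms and single-edge forcing (Lemma~\ref{thm:rule1}) stalls; moreover $N_G(t+1)=\{t,t+2\}$ is too small to trigger the odd-cycle rule (Lemma~\ref{thm:rule2}), and the configuration is not a tripod, so Lemma~\ref{thm:rule3} does not apply either. The plan is therefore to group these pairs into small linear systems $M\bx=\bzero$ — for instance pairing the two straddling unknowns $x_{t-1,t+2}$ and $x_{t,t+3}$ produced by the pair $\{t,t+2\}$ with a companion equation — and to show that $\det M$ is a nonzero monomial in the off-diagonal entries $a_{ij}$ (each nonzero by the pattern of $G$), mirroring the determinant computations in Lemmas~\ref{thm:rule2} and~\ref{thm:rule3} and in Example~\ref{smallgraph}. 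Proceeding outward by distance and solving one such system per layer would force every straddling and apex unknown to vanish, yielding $G_r=K_n$ and hence $X=0$. I expect the main obstacle to be precisely the uniform nonsingularity of these crossing systems for \emph{every} $A\in\S(G)$: as in Example~\ref{smallgraph}, a natural single determinant may factor through a diagonal-difference term $a_{ii}-a_{jj}$ that is allowed to vanish, so the argument will likely require selecting two different equation subsets and a short case analysis to guarantee that at least one coefficient matrix is invertible.
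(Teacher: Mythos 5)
Your reduction to the three families and your Phase~1 (cliquing the two tails via Corollary~\ref{cor:corofcor}) coincide with the paper's proof, but the heart of your argument --- handling the straddling $L$--$R$ pairs and the apex--tail pairs --- is only a plan, not a proof. You propose assembling these unknowns into small linear systems and proving nonsingularity of their coefficient matrices, yet you never exhibit the systems or compute a determinant, and you yourself flag the decisive issue (uniform nonsingularity for \emph{every} $A\in\S(G)$, possible vanishing factors of the form $a_{ii}-a_{jj}$) as an unresolved obstacle. Since this is the only genuinely new family beyond Theorem~\ref{thm:trees}, the proposal as written does not establish the theorem.

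Moreover, the obstruction that motivates your plan is not actually there. You claim that for a crossing pair ``the closed neighbourhood of a triangle vertex always contributes the other triangle vertex as an extra index,'' so that Lemma~\ref{thm:rule1} stalls. This is false once the forces are taken in the right order, because the extra triangle vertex lies in \emph{both} closed neighbourhoods in the current supergraph $G_l$ and is therefore excluded from both index sets in \eqref{ceneqn}. Concretely, in your labelling: after Phase~1, for the pair $\{t,t+1\}$ one has $N_G[t]\cap N_{G_1}[t+1]^c=\{t-1\}$ and $N_G[t+1]\cap N_{G_1}[t]^c=\emptyset$ (the vertex $t+2$ is adjacent in $G_1$ to both $t$ and $t+1$), so the single surviving term $a_{t,t-1}x_{t-1,t+1}$ forces the apex pair $\{t-1,t+1\}$; symmetrically $\{t+1,t+2\}$ forces $\{t+1,t+3\}$. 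With those edges added, the pair $\{t-1,t\}$ becomes focused on $\{t+2\}$: now $N_G[t-1]\cap N_{G_l}[t]^c=\emptyset$ and $N_G[t]\cap N_{G_l}[t-1]^c=\{t+2\}$, so the single term $a_{t,t+2}x_{t-1,t+2}$ forces the straddling pair $\{t-1,t+2\}$ --- precisely one of the unknowns you wanted to place in a $2\times 2$ system. (Your two-term equation arises only if you examine $\{t-1,t\}$ \emph{before} forcing $\{t-1,t+1\}$; the ordering is the whole point.) The paper's proof of Theorem~\ref{thm:q=n-2} iterates exactly this alternation --- apex forces, then straddle forces, one distance layer at a time, the $k$-th stage containing $G^{(k)}$ --- terminating at $K_n$ using Lemma~\ref{thm:rule1} alone, and then invokes Theorem~\ref{sspseq}. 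No determinants, no companion equations, and no case analysis are needed; to repair your proof, replace the speculative crossing-system phase with this ordered single-term forcing.
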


\begin{proof}

Assume that $G=G_{n,m}$ is a graph that contains a path on vertices $\{1,2,\ldots,m,n,m+1,m+2,\ldots,n-1\}$ and an extra edge $\{m,m+1\}$. 
First, we apply Corollary \ref{cor:corofcor} to induced paths $P'$ with $V(P')=\{1,2,\ldots,m\}$ and 
   $P''$ with $V(P'')=\{n-1,n-2,\ldots,m+1\}$.
Let $G_1$ denote the graph with $V(G_1)=V(G)$, and $E(G_1)$ containing the edges of the complete 
graph on vertices of $P'$ and the edges of the complete graph on vertices of $P''$. 


\begin{center}
\begin{tabular}{ccc}
\begin{tikzpicture}
\foreach \i in {1,2} {
\pgfmathsetmacro{\ang}{60*\i}
\node [label={above:$\i$}] (\i) at (\ang:1) {};
}
\foreach \i in {4,5} {
\pgfmathsetmacro{\ang}{60*\i}
\node [label={below:$\i$}] (\i) at (\ang:1) {};
}
\foreach \i in {3} {
\pgfmathsetmacro{\ang}{60*\i}
\node [label={left:$\i$}] (\i) at (\ang:1) {};
}
\foreach \i in {6} {
\pgfmathsetmacro{\ang}{60*\i}
\node [label={below:$\i$}] (\i) at (\ang:1) {};
}
\foreach \i in {7}{
\pgfmathsetmacro{\ang}{90*(\i-5)}
\node [label={below:$\i$}]  (\i) at ([xshift=3cm]\ang:1) {};
}
\foreach \i in {9}{
\pgfmathsetmacro{\ang}{90*(\i-5)}
\node [label={right:$\i$}]  (\i) at ([xshift=3cm]\ang:1) {};
}

\foreach \i in {8}{
\pgfmathsetmacro{\ang}{90*(\i-5)}
\node [label={below:$\i$}] (\i) at ([xshift=3cm]\ang:1) {};
}
\foreach \i in {10}{
\pgfmathsetmacro{\ang}{90*(\i-5)}
\node [label={above:$\i$}] (\i) at ([xshift=3cm]\ang:1) {};
}
\node[label={above:$11$}]  (11) at (1.5,1) {};
\draw (1) -- (2) -- (3) -- (4) -- (5) -- (6) -- (7) -- (8) -- (9) -- (10);
\draw (6) -- (11) -- (7);
    \end{tikzpicture} 
    &\qquad \qquad&
\begin{tikzpicture}
\foreach \i in {1,2} {
\pgfmathsetmacro{\ang}{60*\i}
\node [label={above:$\i$}] (\i) at (\ang:1) {};
}
\foreach \i in {4,5} {
\pgfmathsetmacro{\ang}{60*\i}
\node [label={below:$\i$}] (\i) at (\ang:1) {};
}
\foreach \i in {3} {
\pgfmathsetmacro{\ang}{60*\i}
\node [label={left:$\i$}] (\i) at (\ang:1) {};
}
\foreach \i in {6} {
\pgfmathsetmacro{\ang}{60*\i}
\node [label={below:$\i$}] (\i) at (\ang:1) {};
}
\foreach \i in {7}{
\pgfmathsetmacro{\ang}{90*(\i-5)}
\node [label={below:$\i$}]  (\i) at ([xshift=3cm]\ang:1) {};
}
\foreach \i in {9}{
\pgfmathsetmacro{\ang}{90*(\i-5)}
\node [label={right:$\i$}]  (\i) at ([xshift=3cm]\ang:1) {};
}
\foreach \i in {8}{
\pgfmathsetmacro{\ang}{90*(\i-5)}
\node [label={below:$\i$}] (\i) at ([xshift=3cm]\ang:1) {};
}
\foreach \i in {10}{
\pgfmathsetmacro{\ang}{90*(\i-5)}
\node [label={above:$\i$}] (\i) at ([xshift=3cm]\ang:1) {};
}
\node[label={above:$11$}]  (11) at (1.5,1) {};
\draw (1) -- (2) -- (3) -- (4) -- (5) -- (6) -- (7) -- (8) -- (9) -- (10);
\draw (6) -- (11) -- (7);

\begin{scope}[green]
\foreach \i in {1,...,4}{
    \pgfmathsetmacro{\ipp}{int(\i+2)}
    \foreach \j in {\ipp,...,6}{
        \draw (\i) -- (\j);
    }
}
\foreach \i in {7,8}{
    \pgfmathsetmacro{\ipp}{int(\i+2)}
    \foreach \j in {\ipp,...,10}{
        \draw (\i) -- (\j);
    }
}
\end{scope}
    \end{tikzpicture}
    \\
 $G=G_{11,6}$ && $G_1$
\end{tabular}
\end{center}

Observe that $$\{m,n\} \forces{G_1}{G} \{m-1,n\} \; \text{ and } \;\{m+1,n\} \forces{G_1}{G} \{m+2,n\}$$ 
by Lemma \ref{thm:rule1}, and denote
 $G_{2,1}=G_1 + \{\{m-1,n\}, \{m+1,n\}\}$.
Furthermore, $$\{m-1,m\}\forces{G}{G_{2,1}} \{m-1,m+1\}\; \text{ and } \; \{m+1,m+2\}\forces{G}{G_{2,1}} \{m,m+2\}$$ and let 
$G_{2}=G_{2,1} + \{\{m-1,m+1\}, \{m,m+2\}\}$. Note that $G^{(2)}$ is a subgraph of $G_2$. 

We continue by observing that  $$\{m-1,n\}\forces{G}{G_{2}}  \{m-2,n\} \; \text{ and } \;\{m+2,n\}\forces{G}{G_{2}} \{m+3,n\},$$
resulting in graph $G_{3,1}$. Now 
$$\{m-2,m\}\forces{G}{G_{3,1}} \{m-2,m+1\}\; \text{ and } \; \{m+1,m+3\}\forces{G}{G_{3,1}} \{m,m+3\},$$
resulting in graph $G_{3,2}$, and finally 
$$\{m-1,m+1\}\forces{G}{G_{3,2}} \{m-1,m+2\},$$
 resulting in graph $G_3$,  which has $G^{(3)}$ as a subgraph. 

We proceed to use Lemma \ref{thm:rule1} in this way to obtain graphs $G_4 \subset G_5 \subset \cdots \subset G_{n-1}=K_n$, where each
$G_k$ contains $G^{(k)}$ as a subgraph.   By Theorem \ref{sspseq} this proves $G=G_{n,m} \in \GG$.
\end{proof}
 
The final example in this section illustrates the application of Corollary \ref{cor:corofcor}.  

\begin{example}\label{lollipop}
 Let $G=L_{m,n}$ be an $(m,n)$-lollipop graph, i.e.\ the graph with $m+n$ vertices obtained by 
 joining a complete graph $K_m$ to a path graph $P_n$ by a bridge.  
Hence,  $V(G)=\{1,2,\ldots,m+n\}$ and 
 $$E(G)=\{\{i,j\}; \, 1 \leq i<j \leq n\} \cup \{\{i,i+1\}; \, n\leq i \leq n+m-1\}.$$

We apply Corollary \ref{cor:corofcor} to 
 an induced path $P'$ with $V(P')=\{n,n+1,\ldots,n+m\}$ and denote by $G_1$ the graph with $V(G_1)=V(G)$, 
 and $E(G_1)$ containing the edges of $E(G)$ as well as those of the complete 
graph on vertices of $P'$. 

Observe that for $i_1=1,2,\ldots,n$, we have 
$$\{i,n\}\forces{G}{G_1}  \{i,n+1\}$$ and let 
$G_{2}=G_{1}+ \{\{i,n+1\}\colon \; 1 \leq i \leq n-1\}$.
Now, consecutively for $j=2,3,\ldots,m$ we have
$$\{i,n+j-1\}\forces{G}{G_j} \{i,n+j\}$$ 
and define $G_{j+1} = G_{j}+ \{\{i,n+j\}\colon \; 1 \leq i \leq n-1\}$.
Since $G_{n+m+1}=K_{n+m}$, it follows that  $L_{m,n} \in \GG$ by Theorem \ref{sspseq}.
\end{example}

Now that the characterization of trees in $\GG$ is known, a question which unicyclic graphs are in $\GG$ is next to consider. 
Corollary \ref{rem:uninotinGG} excludes a large family of unicyclic graphs from $\GG$, such as any unicyclic graph with a vertex of degree at least four (see e.g.~$G_{96}$ on Figure \ref{G96}) and all unicyclic graphs with a degree three vertex not contained in the cycle (see e.g.~$H$ on Figure \ref{H}).

\begin{center}
 \begin{figure}[htb!]
 \centering
   \subfigure[$G_{96}$]{
\begin{tikzpicture}[style=thick, scale=0.8]   \label{G96}
	 \coordinate (1)   at (0:1);
	 \coordinate (2)   at (90:1);
	 \coordinate (3)   at (180:1);
	 \coordinate (4)   at (270:1);
	 \coordinate (5)   at (2,1);
	 \coordinate (6)   at (2,-1);
  	 \draw (1) -- (2) --(3)--(4) -- (1)--(5);
	 \draw (1) -- (6);
	 \draw[fill=white] \foreach \x in {(1),(2),(3),(4),(5),(6)} {
		                    \x circle (1mm)
		};	
    \end{tikzpicture}
}
   \subfigure[$G_{98}$]{
  \begin{tikzpicture}[style=thick, scale=0.8]    \label{G98}
	 \coordinate (1)   at (-1,1);
	 \coordinate (2)   at (-1,-1);
	 \coordinate (3)   at (1,-1);
	 \coordinate (4)   at (1,1);
	 \coordinate (5)   at (2,1);
	 \coordinate (6)   at (2,-1);
 	 \draw (4) -- (1) -- (2) --(3)--(4) -- (5);
  	 \draw (3) -- (6);
	 \draw[fill=white] \foreach \x in {(1),(2),(3),(4),(5),(6)} {
		                    \x circle (1mm)
		};	
    \end{tikzpicture}   
}
    \subfigure[$G_{99}$]{
 \begin{tikzpicture}[style=thick, scale=0.8]    \label{G99}
	 \coordinate (1)   at (0:1);
	 \coordinate (2)   at (90:1);
	 \coordinate (3)   at (180:1);
	 \coordinate (4)   at (270:1);
	 \coordinate (5)   at (0:2);
	 \coordinate (6)   at (180:2);
  	 \draw (1) -- (2) --(3)--(4) -- (1)--(5);
	 \draw (3) -- (6);
	 \draw[fill=white] \foreach \x in {(1),(2),(3),(4),(5),(6)} {
		                    \x circle (1mm)
		};	
    \end{tikzpicture}
    }
      \subfigure[$H$]{
   \begin{tikzpicture}[style=thick, scale=0.8]    \label{H}
	 \coordinate (1)   at (0:1);
	 \coordinate (2)   at (90:1);
	 \coordinate (3)   at (180:1);
	 \coordinate (4)   at (270:1);
	 \coordinate (5)   at (2,0);
	 \coordinate (6)   at (3,1);
 	 \coordinate (7)   at (3,-1);
  	 \draw (1) -- (2) --(3)--(4) -- (1)--(5)--(6);
	 \draw (5) -- (7);
	 \draw[fill=white] \foreach \x in {(1),(2),(3),(4),(5),(6),(7)} {
		                    \x circle (1mm)
		};	
    \end{tikzpicture}
}
       \caption{Examples of unicyclic graphs not in $\GG$.}
\end{figure}
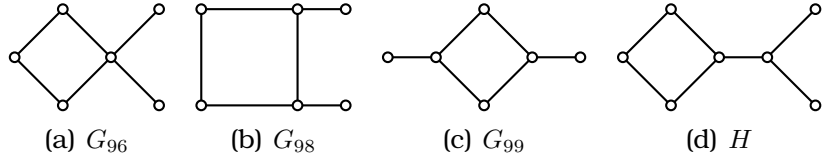
\end{center}

The next example features further examples of unicyclic graphs with two degree three vertices, that are not in $\GG$.

\begin{example}
Consider the graphs $G_{98}$ and $G_{99}$ as shown on Figures \ref{G98} and \ref{G99}.
Note that for matrices
\begin{align*}
A_{98}&=
\left(
\begin{array}{cccccc}
 0 & 2 & 0 & 1 & 0 & 0 \\
 2 & 0 & 1 & 0 & 0 & 0 \\
 0 & 1 & 0 & 1 & 1 & 0 \\
 1 & 0 & 1 & 0 & 0 & 1 \\
 0 & 0 & 1 & 0 & 0 & 0 \\
 0 & 0 & 0 & 1 & 0 & 0 \\
\end{array}
\right), & X_{98}&=
 \left(
\begin{array}{cccccc}
 0 & 0 & 1 & 0 & 0 & 1 \\
 0 & 0 & 0 & 1 & 1 & 0 \\
 1 & 0 & 0 & 0 & 0 & -1 \\
 0 & 1 & 0 & 0 & -1 & 0 \\
 0 & 1 & 0 & -1 & 0 & 0 \\
 1 & 0 & -1 & 0 & 0 & 0 \\
\end{array}
\right),
\\
A_{99}&=
 \left(
\begin{array}{cccccc}
 0 & 1 & 0 & 1 & 0 & 0 \\
 1 & 0 & 1 & 0 & 1 & 0 \\
 0 & 1 & 0 & -1 & 0 & 0 \\
 1 & 0 & -1 & 0 & 0 & 1 \\
 0 & 1 & 0 & 0 & 0 & 0 \\
 0 & 0 & 0 & 1 & 0 & 0 \\
\end{array} 
\right),& X_{99}&=
\left(
\begin{array}{cccccc}
 0 & 0 & 0 & 0 & 1 & 1 \\
 0 & 0 & 0 & 1 & 0 & 0 \\
 0 & 0 & 0 & 0 & -1 & 1 \\
 0 & 1 & 0 & 0 & 0 & 0 \\
 1 & 0 & -1 & 0 & 0 & -1 \\
 1 & 0 & 1 & 0 & -1 & 0 \\
\end{array}
\right),
\end{align*}
where $A_i\in \S(G_i)$ and $X_i\in \Szb(G_i^c)$, 
we have $A_i \circ X_i=I_6 \circ X_i=0$ and $[A_i,X_i]=0$, which proves $G_{98}, G_{99} \notin \GG$.
\end{example}

In Example \ref{smallgraph} we presented a unicyclic graph in $\GG$ with one degree three vertex, however, the question of determining which other graphs in this family belong to $\GG$ is still to be resolved.

\section*{Acknowledgement}
Jephian C.-H. Lin was partially supported by MOST grant 107-2115-M-110-008-MY2.

\bibliographystyle{plain}
\bibliography{ssp}

\end{document}